\def\pedantic{0}
\def\pedant#1{\ifthenelse{\pedantic=1}{\textcolor{red}{ #1}}{}}
\def\achtung#1{\ifthenelse{\pedantic=1}{\marginpar{\raggedright{\textcolor{red}{#1}}}}{}}
\newtheorem{theorem}{Theorem}
\newtheorem{lemma}[theorem]{Lemma}
\newtheorem{corollary}[theorem]{Corollary}
\newtheorem{proposition}[theorem]{Proposition}
\theoremstyle{definition}
\newtheorem{definition}[theorem]{Definition}
\theoremstyle{remark}
\newtheorem{remark}[theorem]{Remark}
\newcommand{\real}{{\mathbb R}}
\newcommand{\zed}{{\mathbb Z}}
\newcommand{\euc}{{\mathbb E}}
\newcommand{\eg}{{\em e.g.}}
\newcommand{\del}{\partial}
\newcommand{\inv}{^{-1}}
\newcommand{\style}[1]{{\emph{#1}}}
\newcommand{\diff}{\Omega}  
\newcommand{\curr}{\Omega} 
\newcommand{\mass}{\mathbf{M}}   
\newcommand{\Normal}[1]{\mathbf{N}^{#1}}   
\newcommand{\Conormal}[1]{\mathbf{C}^{#1}}   
\newcommand{\rect}[1]{\mathcal{R}_{#1}}  
\newcommand{\vdual}{\mathbf{D}}    
\newcommand{\Omin}{{\mathcal O}}
\newcommand{\one}{{\mathbf{1}}}
\newcommand{\Sets}{\mathcal{S}}
\newcommand{\vx}{\mathbf{x}}
\newcommand{\CF}{\mathbf{CF}}
\newcommand{\Def}{\mathbf{Def}}
\newcommand{\dchifloor}{{\lfloor d\chi\rfloor}}
\newcommand{\dchiceil}{{\lceil d\chi\rceil}}
\newcommand{\dmufloor}[1]{{\lfloor d\mu_{#1}\rfloor}}
\newcommand{\dmuceil}[1]{{\lceil d\mu_{#1}\rceil}}
\newcommand{\Planes}[2]{{\mathcal P}_{#1,#2}}
\newcommand{\Grass}[2]{{\mathcal G}_{#1,#2}}
\newcommand{\flatnorm}[1]{\left| #1 \right|_{\flat}} 
\newcommand{\dflat}{d_{\flat}}  
\newcommand{\dflatlow}{\underline{d}_{\flat}}   
\newcommand{\dflatup}{\overline{d}_{\flat}}   
\begin{document}

\title[HADWIGER'S THEOREM FOR DEFINABLE FUNCTIONS]{HADWIGER'S THEOREM
  FOR DEFINABLE FUNCTIONS}
\thanks{This work supported by DARPA \# HR0011-07-1-0002 and by ONR N000140810668.}
\author{Y. Baryshnikov}
\address{Departments of Mathematics and Electrical and Computing Engineering,
        University of Illinois at Urbana-Champaign,
        Urbana IL, USA}
\email{ymb@uiuc.edu}

\author{R. Ghrist}
\address{Departments of Mathematics and Electrical/Systems Engineering,
        University of Pennsylvania,
        Philadelphia PA, USA}
\email{ghrist@math.upenn.edu}

\author{M. Wright}
\address{Department of Mathematics,
        Huntington University,
        Huntington IN, USA}
\email{mwright@huntington.edu}

\begin{abstract}
Hadwiger's Theorem states that $\euc_n$-invariant convex-continuous
valuations of definable sets in $\real^n$ are linear combinations of
intrinsic volumes. We lift this result from sets to data distributions
over sets, specifically, to definable $\real$-valued functions on
$\real^n$. This generalizes intrinsic volumes to (dual pairs of)
non-linear valuations on functions and provides a dual pair of
Hadwiger classification theorems.
\end{abstract}


\keywords{valuations, Hadwiger measure, intrinsic volumes, Euler characteristic.}

\maketitle


\ifthenelse{\pedantic=1}{\linenumbers}{}

\section{Introduction}

Let $\real^n$ denote Euclidean $n$-dimensional space. A
\style{valuation} on a collection $\Sets$ of subsets of $\real^n$ is
an additive function $v: \Sets \to \real$:
\begin{equation}
\label{eq:additive}
	v(A) + v(B) = v(A \cap B) + v(A \cup B) \qquad \text{whenever } A, B, A \cap B, A \cup B \in \Sets.
\end{equation}

Valuation $v$ is \style{$\euc_n$-invariant} if $v(\varphi A) = v(A)$
for all $A \in \Sets$ and $\varphi \in \euc_n$, the group of Euclidean
(or rigid) motions in $\real^n$.  A classical theorem of Hadwiger
\cite{Hadwiger} states that the $\euc_n$-invariant continuous
valuations on compact convex sets $\Sets$ in $\real^n$ (here a
valuation is \style{continuous} with respect to convergence of sets in
the Hausdorff metric) form a finite-dimensional $\real$-vector space
generated by intrinsic volumes $\mu_k$, $k=0,\ldots,n$.

\begin{theorem}[Hadwiger]
Any $\euc_n$-invariant continuous valuation $v$ on compact convex subsets of
$\real^n$ is a linear combination 
of the intrinsic volumes:
\begin{equation}
	v = \sum_{k=0}^n c_k \mu_k \, ,
\end{equation}
for some constants $c_k \in \real$. If $v$ is homogeneous of degree $k$, then $v = c_k \mu_k$.
\end{theorem}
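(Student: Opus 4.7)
My plan is to split $v$ into its homogeneous components, dispatch the extreme degrees directly, and handle intermediate degrees by induction on the ambient dimension combined with a uniqueness principle for simple valuations.

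Since $\euc_n$ contains all translations, $v$ is in particular continuous and translation-invariant, so McMullen's polynomial expansion yields
\[
	v(\lambda K) \;=\; \sum_{k=0}^n \lambda^k v_k(K), \qquad \lambda \geq 0,
\]
where each $v_k$ is a continuous, translation-invariant valuation on compact convex sets, homogeneous of degree $k$. Because rotations commute with dilation, each $v_k$ inherits $\euc_n$-invariance from $v$. It therefore suffices to prove the homogeneous statement $v_k = c_k \mu_k$, which also establishes the final sentence of the theorem.

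For the extreme degrees: when $k=0$, homogeneity of degree zero and continuity force $v_0$ to take a single value on all nonempty convex bodies (they contract through convex bodies to a point), giving $v_0 = c_0 \mu_0$. When $k=n$, $n$-homogeneity combined with translation-invariance implies via a Haar-type argument on rectangular boxes that $v_n$ is a constant multiple of Lebesgue volume on boxes, which extends by continuity (Hausdorff convergence) to all compact convex sets; hence $v_n = c_n \mu_n$.

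For intermediate $0 < k < n$ I would induct on $n$, the base case $n=k$ being covered above. Restricting $v_k$ to the compact convex subsets of a fixed $k$-plane $E \subset \real^n$ produces a continuous, $\euc_k$-invariant, $k$-homogeneous valuation, which by the base case equals a constant $c(E)$ times the $k$-dimensional Lebesgue measure on $E$. Rotation-invariance of $v_k$ forces $c(E)$ to be independent of $E$, yielding a common constant $c_k$, so $v_k$ and $c_k \mu_k$ agree on every convex body contained in a proper affine subspace. The remaining and principal obstacle is to show that the difference $w := v_k - c_k \mu_k$ must then vanish identically. Since $w$ is continuous, $\euc_n$-invariant, translation-invariant, $k$-homogeneous, and \emph{simple} (it vanishes on all convex bodies of dimension less than $n$), this step amounts to Klain's uniqueness theorem: a simple even translation-invariant valuation of degree $k$ is determined by its Klain function on the Grassmannian $\Grass{k}{n}$, and $\euc_n$-invariance makes that function identically zero. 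This uniqueness step is the technical crux; once it is in hand, $v_k = c_k \mu_k$ and the theorem follows.
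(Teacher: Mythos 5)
The paper does not supply a proof of this theorem at all; Hadwiger's Theorem is stated as classical background, with the attribution to \cite{Hadwiger} and with Klain's short proof \cite{Klain} listed in the bibliography. So there is no internal proof to compare your proposal against. That said, a few remarks on the proposal itself.

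Your outline follows the now-standard modern route (McMullen's homogeneous decomposition, the extreme degrees $k=0$ and $k=n$, and then Klain's uniqueness theorem for simple valuations), which is essentially the approach of \cite{Klain} and \cite{KR}. Two points deserve tightening. First, the step where you conclude $w := v_k - c_k\mu_k$ is \emph{simple} is not fully justified by what you wrote: restricting to a $k$-plane and using the base case only shows $w$ vanishes on bodies of dimension $\le k$, whereas simplicity requires vanishing on bodies of every dimension $j<n$. This is exactly where the induction on $n$ has to do work: for a body lying in a $j$-plane with $k<j<n$, restrict $v_k$ to that plane, apply Hadwiger in dimension $j$ (inductive hypothesis) to write the restriction as a multiple of $\mu_k$, and then use the already-established agreement on $k$-dimensional bodies to pin the multiple to $c_k$, hence $w=0$ there. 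Without spelling this out, the claim ``vanishes on all convex bodies of dimension less than $n$'' is unsupported. Second, the appeal to the Klain function requires the valuation to be even; this is fine here because $\euc_n$ (rigid motions, i.e.\ all isometries) contains reflections, but that should be said since some authors take $\euc_n$ to mean only orientation-preserving motions, in which case one needs a separate argument that the odd part of a simple $\mathrm{SO}(n)$-invariant valuation vanishes. With those two points filled in, the proposal is a correct and standard proof.
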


The \style{intrinsic
  volumes}\footnote{Intrinsic volumes are also known in the literature
  as \style{Hadwiger measures}, \style{quermassintegrale},
  \style{Lipschitz-Killing curvatures}, \style{Minkowski functionals},
  and, likely, more.}  $\mu_k$ are characterized uniquely by (1)
$\euc_n$ invariance, (2) normalization with respect to a closed unit
ball, and (3) homogeneity: $\mu_k(\lambda\cdot A)=\lambda^k(A)$ for
all $A\in\Sets$ and $\lambda\in\real^+$. These measures generalize
Euclidean $n$-dimensional volume ($\mu_n$) and Euler characteristic
($\mu_0$).

This paper extends Hadwiger's Theorem to similar valuations on
functions 
instead of sets. 
Section \ref{sec:back}
gives background on the definable (o-minimal) setting that lifts
Hadwiger's Theorem to tame, non-convex sets and then to constructible
functions; there, we also review the convex-geometric,
integral-geometric, and sheaf-theoretic approaches to Hadwiger's
Theorem. In Section \ref{sec:dist}, we consider definable functions
$\Def(\real^n)$ as $\real$-valued functions with tame graphs, and
correspondingly define dual pairs of (typically) non-linear ``integral''
operators $\int\cdot\dmufloor{k}$ and $\int\cdot\dmuceil{k}$ mapping
$\Def(\real^n)\to\real$ as generalizations of intrinsic volumes, so
that $\int\one_A\dmufloor{k}=\mu_k(A)=\int\one_A\dmuceil{k}$ for all
$A$ definable. These integrals are $\euc_n$-invariant and satisfy
generalized homogeneity and additivity conditions reminiscent of
intrinsic volumes; they are furthermore compact-continuous with
respect to a dual pair of topologies on functions. This culminates
in Section \ref{sec:HT4D} in a generalization of Hadwiger's Theorem
for functions:

\begin{theorem}[Main]
Any $\euc_n$-invariant definably lower- (resp. upper-) continuous valuation $v:\Def(\real^n)\to\real$ is of the form:
\begin{equation}
	v(h) = \sum_{k=0}^n
    \left(
        \int_{\real^n} c_k\circ h \dmufloor{k}
    \right)
\end{equation}
(resp., integrals with respect to $\dmuceil{k}$) for some $c_k \in C(\real)$ continuous and monotone, satisfying $c_k(0)=0$.
\end{theorem}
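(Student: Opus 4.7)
The plan is to reduce the classification to the set-level Hadwiger theorem by probing $v$ on scalar multiples of indicator functions. For each fixed $t\in\real$, define $w_t:A\mapsto v(t\cdot\one_A)$ on compact definable subsets of $\real^n$. The pointwise identity $t\cdot\one_{A\cup B}+t\cdot\one_{A\cap B}=t\cdot\one_A+t\cdot\one_B$ makes $w_t$ additive, and $\euc_n$-invariance together with the appropriate continuity are inherited from $v$. The definable version of Hadwiger's theorem recalled in Section \ref{sec:back} then supplies unique constants with
\[ v(t\cdot\one_A) \;=\; \sum_{k=0}^n c_k(t)\,\mu_k(A), \]
so that each $c_k$ is well-defined as a real-valued function of $t$.

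Next I would establish regularity of each $c_k$. Additivity forces $v(0)=0$ and hence $c_k(0)=0$. To isolate $c_k(t)$, evaluate the above formula against a nested family of test sets---e.g., Euclidean $j$-balls of several radii for $j=0,\dots,n$---whose intrinsic-volume matrix is triangular and hence invertible; $c_k(t)$ is then expressed as a fixed $\real$-linear combination of values $v(t\cdot\one_{A_i})$. Continuity of $v$ along the one-parameter family $t\mapsto t\cdot\one_A$ now forces continuity of each $c_k$. Monotonicity is where the one-sided floor/ceiling hypothesis enters: for $s<t$ the functions $s\cdot\one_A$ and $t\cdot\one_A$ are pointwise ordered, and the asymmetric definable lower- (resp.\ upper-) continuity of $v$ converts this order into $c_k(s)\leq c_k(t)$ (resp.\ $\geq$) through the same triangular test.

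With the $c_k$ identified, verify the formula first on simple definable functions $h=\sum_i t_i\cdot\one_{A_i}$ with $A_i$ definable. On such $h$, the right-hand integrals $\int c_k\circ h\,\dmufloor{k}$ reduce by the level-set construction of Section \ref{sec:dist} to a finite weighted sum of intrinsic volumes on the pieces of a common refinement of $\{A_i\}$, matching the formula already proved for each $t_i\cdot\one_{A_i}$ via additivity of $v$ and inclusion-exclusion. Every $h\in\Def(\real^n)$ is then a definable lower- (resp.\ upper-) continuous limit of such simple approximants, obtained by quantizing $\mathrm{image}(h)$ compatibly with a cell decomposition of its graph; both sides pass to the limit by their respective continuity axioms.

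The main obstacle will be the monotonicity step: the continuity of $v$ is asymmetric, and upgrading pointwise ordering $s\cdot\one_A\leq t\cdot\one_A$ to the inequality $c_k(s)\leq c_k(t)$ requires careful interplay between the triangular test system and the floor/ceiling convention---especially because intrinsic volumes are signed quantities on non-convex tame sets. A subsidiary difficulty is verifying that the simple-function approximations converge in the prescribed definable topology rather than merely pointwise, so that the continuity axiom legitimately applies in the final extension; this convergence is also what forces the correct choice of $\dmufloor{k}$ versus $\dmuceil{k}$ in the resulting formula.
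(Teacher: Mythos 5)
Your overall strategy mirrors the paper's: reduce to the set-level Hadwiger theorem on scalar multiples of indicator functions, extend to constructible (simple) functions by additivity, and pass to general definable functions by the step-function approximation $h_m = \tfrac1m\lfloor mh\rfloor$. The triangular-test-set device for extracting the $c_k$ is a pleasant explicit way to get uniqueness and continuity of the coefficients; the paper simply appeals to uniqueness in Hadwiger's theorem for sets, but your version is not wrong.

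The genuine gap is the monotonicity step, and your proposed mechanism for it does not work. Pointwise ordering $s\cdot\one_A\le t\cdot\one_A$ places no constraint on $v(s\cdot\one_A)$ versus $v(t\cdot\one_A)$: a valuation need not be monotone with respect to the pointwise order, and lower- (or upper-) continuity is a statement about convergence, not about order-preservation. Indeed, on the constructible level the $c_k$ in $v(r\cdot\one_A)=\sum_k c_k(r)\mu_k(A)$ can be arbitrary functions of $r$ with $c_k(0)=0$; Lemma \ref{lem:HadwigerConstrFunc} imposes no monotonicity at all. Monotonicity only enters when one demands that the resulting functional extend continuously from $\CF$ to all of $\Def(\real^n)$, and the mechanism is Proposition \ref{prop:DecreasingComposition}: composing with a strictly decreasing $c$ converts the lower ($\lfloor\cdot\rfloor$) approximation scheme into the upper ($\lceil\cdot\rceil$) one, so that $\int c(h)\,\dmufloor{k}$ computed via lower step approximations of $h$ actually agrees with an upper-type limit. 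Consequently a $c_k$ that is increasing on one interval and decreasing on another yields a functional that is neither lower- nor upper-continuous, and lower-continuity of $v$ therefore forces each $c_k$ to be monotone increasing (with the dual statement in the upper case). You would need to supply this argument, or something equivalent, rather than invoking order-preservation.

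A secondary point: you flag the issue of showing that simple approximants converge in the definable flat topology, which is correct; the paper disposes of this via the observation in Remark \ref{rem:FlatTopoDistinct} that $\tfrac1m\lfloor mh\rfloor\to h$ in the lower flat topology (and the ceiling version in the upper). This also dictates which integral ($\dmufloor{k}$ versus $\dmuceil{k}$) appears: the lower step approximants live in the lower flat topology and so match $\dmufloor{k}$.
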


The $k=0$ intrinsic measure $\dmufloor{0}$ is a recent generalization
$\dchifloor$ of Euler characteristic \cite{BG:pnas} shown to have
applications to signal processing \cite{GR} and Gaussian random fields
\cite{BoBo}. The $k=n$ intrinsic measure $\dmufloor{n}$ is Lebesgue
volume. The measures come in dual pairs $\dmufloor{k}$ and
$\dmuceil{k}$ as a manifestation of (Verdier-Poincar\'e) duality. Our
results yield the following:

\begin{corollary}
Any $\euc_n$-invariant valuation, both upper- and lower-continuous, is
a weighted Lebesgue integral.
\end{corollary}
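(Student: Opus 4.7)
The plan is to invoke the Main Theorem twice—once using lower-continuity and once using upper-continuity. Since both hypotheses hold, there exist continuous monotone functions $c_k, c'_k \in C(\real)$, each vanishing at $0$, with
$$v(h) = \sum_{k=0}^n \int_{\real^n} c_k \circ h\, \dmufloor{k} = \sum_{k=0}^n \int_{\real^n} c'_k \circ h\, \dmuceil{k}$$
for every $h \in \Def(\real^n)$. The goal is to show that $c_k \equiv c'_k \equiv 0$ for $k < n$, so that only the top-degree Lebesgue term survives.

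First I would identify $c_k = c'_k$ by testing on dilates of a closed convex set. Taking $h = t\one_{\lambda B}$ with $B$ the closed unit ball, both dual measures evaluate $\one_{\lambda B}$ to the classical intrinsic volume $\mu_k(\lambda B) = \lambda^k \mu_k(B)$, yielding the polynomial identity in $\lambda$
$$\sum_{k=0}^n c_k(t) \mu_k(B) \lambda^k = \sum_{k=0}^n c'_k(t) \mu_k(B) \lambda^k$$
valid for all $\lambda > 0$ and $t \in \real$. Matching coefficients of $\lambda^k$ (and using $\mu_k(B) > 0$ for each $k=0,\ldots,n$) yields $c_k \equiv c'_k$.

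Next I would test on indicators of \emph{open} definable sets, where the duality $\dmufloor{k} \leftrightarrow \dmuceil{k}$---a Verdier–Poincar\'e manifestation---actively separates the two measures for $k < n$; already for $k=0$, $\dchifloor$ gives the compactly supported, and $\dchiceil$ the ordinary, Euler characteristic, which disagree on an open interval. For $k = n$, both measures are Lebesgue and so agree on open sets. Selecting a suitable family of scaled open definable sets $U_\lambda$ and expanding
$$\sum_{k=0}^{n-1} c_k(t)\left(\int_{\real^n} \one_{U_\lambda}\, \dmufloor{k} - \int_{\real^n} \one_{U_\lambda}\, \dmuceil{k}\right) = 0$$
polynomially in $\lambda$ via the homogeneity of the intrinsic measures isolates each degree and forces $c_k \equiv 0$ for $k < n$. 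The only surviving term is $v(h) = \int_{\real^n} c_n \circ h\, dx$, a weighted Lebesgue integral.

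The main obstacle is the middle step: for every $k < n$ one must exhibit an open definable test set whose $\dmufloor{k}$- and $\dmuceil{k}$-measures differ, and organize these discrepancies over a dilating family into a nondegenerate system that separates the $c_k$ simultaneously. The scaling trick reduces this to checking a single explicit computation on, \eg, an open unit ball in each relevant codimension, where the Verdier–Poincar\'e discrepancy is already manifest at the level of the underlying intrinsic volumes.
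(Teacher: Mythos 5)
Your Steps~1 and~2 are sound: invoking Theorem~\ref{thm:main} twice to obtain both representations, and then scaling closed balls $t\one_{\lambda B}$ to compare coefficients of $\lambda^k$ and deduce $c_k \equiv c'_k$, is a correct and clean argument. The fatal gap is in Step~3.

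The premise of Step~3 is false: for \emph{any} definable set $U$, open or not,
\[
\int_{\real^n} \one_U \ \dmufloor{k} \;=\; \mu_k(U) \;=\; \int_{\real^n} \one_U \ \dmuceil{k},
\]
a fact the paper states explicitly in the Introduction. This follows directly from Definition~\ref{def:int} (or from \eqref{eq:ex} and its upper counterpart): $\one_U$ is a \emph{constructible} function, and on constructible functions the floor and ceiling step-approximations are exact, so the lower and upper Hadwiger integrals both collapse to the ordinary $\mu_k$-integral. Thus the quantity in parentheses in your display in Step~3 is identically zero, and the equation $0 = 0$ places no constraint on the $c_k$. In particular your claim that ``already for $k=0$, $\dchifloor$ gives the compactly supported, and $\dchiceil$ the ordinary, Euler characteristic, which disagree on an open interval'' is a confusion: $\int \one_{(0,1)} \dchifloor = \chi((0,1)) = -1 = \int \one_{(0,1)} \dchiceil$. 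The floor/ceiling duality does not send $\one_U$ to $\one_{\overline U}$; it is the duality $\int h \dmufloor{k} = -\int (-h)\dmuceil{k}$ of \eqref{eq:dual}, acting on $h$, not on excursion sets, and it is invisible on indicator functions.

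To salvage the strategy you need \emph{non}-constructible test functions, which is exactly where $\dmufloor{k}$ and $\dmuceil{k}$ actually part ways. A natural family is the ramp $h_\lambda(x) = t\,(1 - |x|/\lambda)_+$: then $\{h_\lambda \ge s\}$ is a closed ball and $\{h_\lambda > s\}$ an open ball, and by \eqref{eq:PoincareForSets} the open ball satisfies $\mu_k(B_r) = (-1)^{n-k}\mu_k(\overline B_r)$, producing a genuine discrepancy between the two Hadwiger integrals whenever $n-k$ is odd. Combined with the $\lambda$-scaling, this forces $c_k \equiv 0$ for those $k$; you would still need an additional argument for $n-k$ even, where closed and open excursion sets have the \emph{same} $\mu_k$ and the above test family is blind. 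The paper instead bypasses explicit test functions: it argues directly from linear independence of the functionals $h \mapsto \int (\cdot)\dmufloor{k}$ and $h \mapsto \int (\cdot)\dmuceil{j}$ (for $k,j < n$), asserting that these only coincide at $k=j=n$, where both equal the Lebesgue integral, and then matching the two representations term by term.
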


We conclude the Introduction remarking that over the past few years several very
interesting papers by M. Ludwig, A. Tsang and others appeared, that
deal with valuations (often tensor- or set-valued) on various
functional spaces (such as $L_p$ and Sobolev spaces): for a recent report, see \cite{Ludwig}. 
Our approach deviates from this circle of results
primarily in the choice of the functional space: definable
functions form a distinctly
different domain for the valuation. The quite fine topologies we
impose on the definable functions yield a rich supply of the
valuations continuous in these topologies.

\section{Background}
\label{sec:back}

\subsection{Euler characteristic}\label{subsec:Euler}

Intrinsic volumes are built upon the Euler characteristic. Among the
many possible approaches to this topological invariant ---
combinatorial \cite{KR}, cohomological \cite{GZ}, sheaf-theoretic
\cite{Schapira,Schurmann}, we use the language of o-minimal geometry
\cite{vdD}. An \style{o-minimal structure} is a sequence
$\Omin=(O_n)_n$ of Boolean algebras of subsets of $\real^n$ which
satisfy a few basic axioms (closure under cross products and
projections; algebraic basis; and $O_1$ consists of finite unions of
points and open intervals). Examples of o-minimal structures include
the semialgebraic sets, globally subanalytic sets, and (by slight
abuse of terminology) semilinear sets; more exotic structures with
exponentials also occur \cite{vdD}. The details of o-minimal geometry
can be ignored in this paper, with the following exceptions:

\begin{enumerate}
\item Elements of $\Omin$ are called \style{tame} or, more properly,
  \style{definable} sets.
\item A mapping between definable sets is definable if and only if its
  graph is a definable set.
\item The basic equivalence relation on definable sets is definable
  bijection; these are not necessarily continuous.
\item The Triangulation Theorem \cite[Thm 8.1.7, p. 122]{vdD}: any
  definable set $Y$ is definably equivalent to a finite disjoint union
  of open simplices $\{\sigma\}$ of different dimensions.
\item The Hardt Theorem \cite{vdD}: for $f:X\to Y$ definable, $Y$ has
  a triangulation into open simplices $\{\sigma\}$ such that
  $f\inv(\sigma)$ is homeomorphic to $U_\sigma\times\sigma$ for
  $U_\sigma$ definable, and, on this inverse image, $f$ acts as
  projection.
\end{enumerate}
For more information, the reader is encouraged to consult \cite{vdD,Coste,Nicolaescu:normal}.

The \style{(o-minimal) Euler characteristic} is the valuation $\chi$
that evaluates to $(-1)^k$ on an open $k$-simplex. It is well-defined
and invariant under definable bijection \cite[Sec.~4.2]{vdD}, and,
among definable sets of fixed dimension (the dimension of the largest
cell in a triangulation), is a complete invariant of definable sets up
to definable bijection.
%
Note that the o-minimal Euler characteristic coincides with the
homological Euler characteristic (alternating sum of ranks of homology
groups) on compact definable sets. For locally compact definable sets,
it has a cohomological definition (alternating sum of ranks of
compactly-supported sheaf cohomology), yielding invariance with
respect to proper homotopy.


\subsection{Intrinsic volumes}

Intrinsic volumes have a rich history (see, \eg,
\cite{Bernig,Cheeger,KR,Schanuel}) and as many formulations as names,
including the following:

\noindent {\bf Slices:} One way to define the intrinsic volume
$\mu_k(A)$ of a definable set $A$ is in terms of the Euler characteristic of all slices of
$A$ along affine codimension-$k$ planes:
\begin{equation}
\label{eq:ivdef}
		\mu_k(A) = \int_{\Planes{n}{n-k}} \chi(A \cap P) \ d\lambda(P) ,
\end{equation}
where $\lambda$ is the following measure on $\Planes{n}{n-k}$, the
space of affine $(n-k)$-planes in $\real^n$. Each affine subspace $P
\in \Planes{n}{n-k}$ is a translation of some linear subspace $L \in
\Grass{n}{n-k}$, the Grassmannian of $(n-k)$-dimensional subspaces of
$\real^n$. That is, $P$ is uniquely determined by $L$ and a vector
$\vx \in L^\bot$, such that $P = L + \vx$. Thus, we can
integrate over $\Planes{n}{n-k}$ by first integrating over $L^\bot$
and then over
$\Grass{n}{n-k}$. 
Equation \eqref{eq:ivdef} is
equivalent to
\begin{equation}
\label{eq:ivdef2}
	\mu_k(A) = \int_{\Grass{n}{n-k}} \int_{\real^n/L} \chi(A \cap (L+\vx)) \ d\vx \ d\gamma(L) ,
\end{equation}
where $L \in\Grass{n}{n-k}$, the factorspace $\real^n/L$ is given the
natural Lebesgue measure,
and $\gamma$ is the Haar (i.e. $\mathit{SO}(n)$-invariant) measure on the Grassmannian, scaled
appropriately.

\noindent {\bf Projections:} Dual to the above definition, one can
express $\mu_k$ in terms of projections onto $k$-dimensional linear
subspaces: for any definable $A\subset\real^n$ and $0 \le k \le n$,
\begin{equation}
\label{eq:ivProjEq}
		\mu_k(A) = \int_{\Grass{n}{k}} \int_{L} \chi(\pi_L^{-1}(\vx)) \ d\vx \ d\gamma(L)
\end{equation}
where $L \in\Grass{n}{k}$ and $\pi_L^{-1}(\vx)$ is the fiber over $\vx \in L$ of the orthogonal projection map $\pi : A \to L$.
%
For $A$ convex, Equation \eqref{eq:ivProjEq} reduces to
\begin{equation*}
	\mu_k(A) = \int_{\Grass{n}{k}} \mu_k(A | L) \ d\gamma(L)
\end{equation*}
where the integrand is the $k$-dimensional (Lebesgue) volume of the
projection of $A$ onto a $k$-dimensional subspace $L$ of $\real^n$.


\subsection{Normal, conormal, and characteristic cycles}

Perspectives from geometric measure theory and sheaf theory are also
relevant to the definition of intrinsic volumes. In this section, we
restrict to the o-minimal structure of globally subanalytic sets and
use analytic tools based on geometric measure theory, following
Alesker \cite{Alesker, Alesker:gafa}, Fu \cite{Fu}, Nicolaescu
\cite{Nicolaescu:conormal, Nicolaescu:normal} and many others.

Let $\diff^k_c(\real^n)$ be the space of differential $k$-forms on
$\real^n$ with compact support. Let $\curr_k(\real^n)$ be the space of
$k$-currents --- the topological dual of $\diff_c^k(\real^n)$. Given
any $k$-current $T \in \curr_k(\real^n)$, the boundary of $T$ is $\del
T \in \curr_{k-1}(\real^n)$ defined as the adjoint to the exterior
derivative $d$. A \style{cycle} is a current with null boundary.

It is customary to use the \style{flat topology} on currents \cite{Federer}. The \style{mass} of a $k$-current $T$ is
\begin{equation}
	\mass(T) = \sup\left\{T(\omega) : \omega \in
        \curr_c^k(\real^n) \ \text{and} \sup_{x \in \real^n}
        |\omega(x)| \le 1 \right\}
\end{equation}
($|\omega|$ is the usual norm),
which generalizes the volume of a submanifold. The \style{flat norm}
of a $k$-current $T$ is
\begin{equation}\label{eq:FlatNorm}
	\flatnorm{T} = \inf\left\{ \mass(R) + \mass(S) : T = R + \del S, R \in \rect{k}(\real^n), S \in \rect{k+1}(\real^n) \right\},
\end{equation}
where $\rect{k}$ is the space of rectifiable $k$-currents. The flat
norm quantifies the minimal-mass decomposition of a $k$-current $T$
into a $k$-current $R$ and the boundary of a $(k+1)$-current $S$.

\noindent {\bf Normal cycle:} The \style{normal cycle} of a compact
definable set $A$ is a definable $(n-1)$-current $\Normal{A}$ on the
unit sphere cotangent bundle $U^*\real^n\cong S^{n-1}\times\real^n$ that is
Legendrian with respect to the canonical 1-form $\alpha$ on
$T^*\real^n$. The normal cycle generalizes the unit normal bundle of
an embedded submanifold to compact definable sets. The normal cycle is
additive: for $A$ and $B$ compact and definable,
\begin{equation}
	\Normal{A \cup B} + \Normal{A \cap B} = \Normal{A} + \Normal{B} .
\end{equation}

The intrinsic volume $\mu_k$ is representable as integration of a
particular (non-unique) form $\alpha_k\in\diff^{n-1}U^*\real^n$
against the normal cycle:
\begin{equation}\label{eq:alphas}
        \mu_k(A) = \int_{\Normal{A}}\alpha_k .
\end{equation}
Fu \cite{Fu} gives a formula for the normal cycle in terms of
stratified Morse theory; Nicolaescu \cite{Nicolaescu:normal} gives a
nice description of the normal cycle from Morse theory.

\noindent {\bf Conormal cycle:} The \style{conormal cycle} (also known
as the \style{characteristic cycle} \cite{Kashiwara,KS,Schurmann}) of
a compact definable set $A$ is a Lagrangian $n$-current $\Conormal{A}$
on $T^*\real^n$ that generalizes the cone of the unit normal
bundle. Indeed, the conormal cycle is the cone over the normal
cycle. An intrinsic description for $A$ a
submanifold-with-corners is that the conormal cycle is the union of duals to
tangent cones at points of $A$. The conormal cycle is 
additive: for $A$ and $B$ definable,
\begin{equation}
	\Conormal{A \cup B} + \Conormal{A \cap B} = \Conormal{A} + \Conormal{B} .
\end{equation}

\begin{figure}
	\begin{center}
		\includegraphics{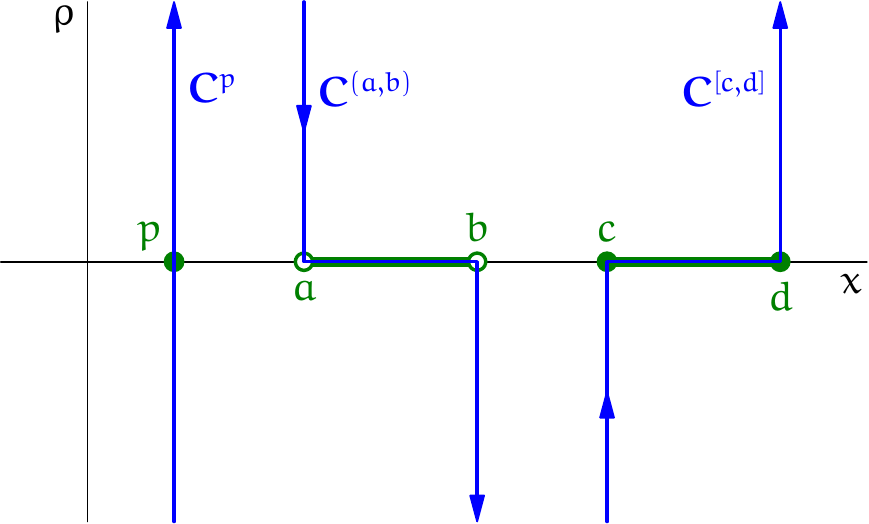}
		\caption[Conormal cycle of an interval]{Conormal
                  cycles of the point $p$, the open interval $(a,b)$,
                  and the closed interval $[c,d]$ illustrate the
                  additivity of the conormal cycle.}
		\label{ConormalCycleOfInterval}
	\end{center}
\end{figure}

The intrinsic volume $\mu_k$ is representable as integration of a
certain (non-unique) form $\omega_k\in\diff^{n}T^*\real^n$ (supported
by a bounded neighborhood of the zero section of the cotangent bundle) against the conormal cycle:
\begin{equation}\label{eq:int-conormal}
        \mu_k(A) = \int_{\Conormal{A}}\omega_k .
\end{equation}

As the conormal cycles are cones, one can always rescale the forms
$\omega_k$ so that they are supported in a given neighborhood of  the
zero section of the cotangent bundle. We fix the neighborhood once and
for all, and will assume henceforth that all $\omega_k$ are supported
in the unit ball bundle $B_1^*\real^n:=\{|P|\leq 1\}\subset T^*\real^n$.

The microlocal index theorem \cite{KS,Schurmann} gives an
interpretation of the conormal cycle in terms of stratified Morse
theory.

\noindent {\bf Continuity:} The flat norm on conormal cycles yields a
topology on definable subsets on which the intrinsic volumes are
continuous. For definable subsets $A$ and $B$, define the \style{flat
  metric} by
\begin{equation}\label{eq:FlatMetricSets}
	\dflat(A,B) = \flatnorm{(\Conormal{A} - \Conormal{B})\cap B^*_1\real^n},
\end{equation}
thereby inducing the \style{flat topology}. (That this is a metric
follows from $\Conormal{-}$ being an injection on definable subsets.)
For any $T \in \curr_n$ and $\omega \in \diff_c^n$, both supported on
$B^*_1\real^n$:

\begin{equation}\label{eq:FlatNormIneq}
	|T(\omega)| \le \flatnorm{T} \cdot \max \left\{ \sup_{B^*_1\real^n}
        |\omega| , \sup_{B^*_1\real^n} |d\omega| \right\}.
\end{equation}
Since the intrinsic volumes can be represented by integration of
bounded forms over the intersection of the conormal cycle with the
unit ball bundle, the intrinsic volumes are
continuous with respect to the flat topology. We remark also that for
the {\em convex} constructible sets, the flat topology is equivalent
to the one given by the Hausdorff metric. 

\section{Intrinsic volumes for constructible functions}
\label{sec:const}

It is possible to extend the intrinsic volumes beyond definable sets. The
\style{constructible functions}, $\CF$, are functions
$h:\real^n\to\real$ with discrete image  and definable level sets.
By abuse of terminology, $\CF$ will always refer to compactly
supported definable functions with {\em finite} image in $\real$. 

As the integral with respect to the Euler characteristics is well
defined for constructible functions, one can extend the intrinsic
volumes to constructible functions using the slicing definition
above:
\begin{equation}
\label{eq:IVforCF}
		\mu_k(h) = \int_{\Grass{n}{n-k}} \int_{\real^n/L} \left(\int_{L+\vx}
                hd\chi\right) \ d\vx \ d\gamma(L).
\end{equation}

In so doing, one obtains, \eg, the following generalization of the Poincar\'e theorem for Euler characteristic.

We need the Verdier duality operator in $\CF$, which is defined e.g. in \cite{Schapira}. Briefly, the dual of $h \in \CF$ is a function $\vdual h$ whose value at $x_0$ is given by
\begin{equation}
	(\vdual h)(x_0) = \lim_{\epsilon \to 0} \int_{\real^n} \one_{B(x_0,\epsilon)} h \ d\chi,
\end{equation}
where the integral is with respect to Euler characteristic (see also
\cite{BG:siam}), and $B(x_0,\epsilon)$ is the $n$-dimensional ball of
radius $\epsilon$ centered at $x_0$. In many cases, this duality
swaps interiors and closures. For example, if $A$ is a convex open
set with closure $\overline{A}$, then $\vdual \one_A =
\one_{\overline{A}}$ and $\vdual \one_{\overline{A}} = \one_A$.

\begin{proposition}\label{prop:generalPoincare}
	For a constructible function $h$ on $\real^n$, $h\in\CF$, and $\vdual$ the Verdier duality operator in $\CF$,
	\begin{equation}
		\int_{\real^n} h \ d\mu_k = (-1)^{n-k} \int_{\real^n}
                \vdual h \ d\mu_k.
	\end{equation}
\end{proposition}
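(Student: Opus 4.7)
My plan is to reduce the identity to a slice-by-slice application of Poincar\'e--Verdier duality for the Euler characteristic integral, via the slicing formula \eqref{eq:IVforCF}. The base case to be established or cited is the classical identity on $\real^m$:
\begin{equation*}
  \int_{\real^m} g\, d\chi \;=\; (-1)^m \int_{\real^m} \vdual g\, d\chi, \qquad g\in\CF(\real^m),
\end{equation*}
for the ordinary Verdier dual on $\real^m$. This is standard in constructible sheaf theory (\cite{Schapira,Schurmann}) and admits an elementary verification by additivity after triangulation: it reduces to the indicator of an open $m$-cell, where $\chi = (-1)^m$ and the Verdier dual is (up to the appropriate sign) supported on the closure with $\chi = 1$.

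Given the base case, I combine it with the slicing formula. On each affine $(n-k)$-plane $P = L+\vx$, applying Poincar\'e--Verdier intrinsically in $P$ (of dimension $n-k$) to the restricted function $h|_P$ yields
\begin{equation*}
  \int_P h|_P\, d\chi \;=\; (-1)^{n-k}\int_P \vdual_P(h|_P)\, d\chi,
\end{equation*}
where $\vdual_P$ denotes Verdier duality computed intrinsically in $P$ rather than in the ambient $\real^n$. The crucial compatibility step is then the identity
\begin{equation*}
  \vdual_P(h|_P) \;=\; (\vdual h)|_P,
\end{equation*}
valid for $\gamma\otimes\text{Leb}$-a.e.\ parameter $(L,\vx)$. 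Granted these two ingredients, Fubini applied to \eqref{eq:IVforCF} immediately gives $\mu_k(h) = (-1)^{n-k}\mu_k(\vdual h)$.

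The principal obstacle is the restriction--compatibility identity, and this is where the substantive work lies. To address it, I would fix a definable stratification of $\real^n$ adapted to the level sets of $h$; a definable version of Sard's theorem then shows that the set of parameters $(L,\vx)$ for which $P = L+\vx$ fails to meet every stratum transversely has $\gamma\otimes\text{Leb}$-measure zero. For such transverse $P$ and any $x_0\in P$, the Hardt triviality theorem presents a small ambient ball $B(x_0,\epsilon)$ as a definable product of the slice ball $B_P(x_0,\epsilon)$ with a small transverse disk of dimension $k$; multiplicativity of the Euler-characteristic integral under this product identifies the pointwise limit defining $(\vdual h)(x_0)$ with the one defining $\vdual_P(h|_P)(x_0)$, so the desired compatibility holds off a null set of parameters. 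An alternative route that avoids restriction compatibility entirely would pass through the conormal cycle: the representation $\mu_k(h) = \int_{\Conormal{h}}\omega_k$, together with the fact that Verdier duality corresponds on conormal cycles to pullback by the antipodal map of $T^*\real^n$ (which multiplies $\omega_k$ by $(-1)^{n-k}$ on account of the bidegree structure of $\omega_k$), would yield the proposition directly.
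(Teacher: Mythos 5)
Your proposal is correct and follows essentially the same route as the paper's proof: reduce to the $k=0$ case via the slicing formula \eqref{eq:IVforCF}, use Sard's theorem to get transversality of a.e.\ affine $(n-k)$-plane $P$ to the level sets of $h$, and then invoke slice-wise duality plus Fubini. The one point of (cosmetic) divergence is the tool for the compatibility step $\vdual_P(h|_P) = (\vdual h)|_P$ on a transversal slice: you spell it out via Hardt triviality and the product structure of small balls, while the paper invokes Thom's second isotopy lemma and leaves the compatibility implicit; these are interchangeable stratified-geometry inputs, so the argument is substantively the same.
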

\begin{proof}
The result holds in the case $k=0$ (see \cite{Schapira}). 
From Equation (\ref{eq:IVforCF}), $\mu_k$ is defined by integration
with respect to $d\chi$ along codimension-$k$ planes, followed by the
integration over the planes. By Sard's theorem, for (Lebesgue) almost all
$L\in\Grass{n}{n-k}$ and $\vx\in {\real^n/L}$, the level sets of $h$
are transversal to $L+\vx$, whence, by Thom's second isotopy lemma,
\cite{shiota}, 
 	\begin{equation}
		\int_{L+\vx} h \ d\chi = (-1)^{n-k} \int_{L+\vx}
                \vdual h \ d\chi
	\end{equation}
for almost all $L$ and $\vx$. Integration over $\Planes{n}{n-k}$ finishes the proof.
\end{proof}

\begin{remark}
	If definable sets $A$, $\overline{A} \subset \real^n$ satisfy $\vdual \one_A = \one_{\overline{A}}$, then Proposition \ref{prop:generalPoincare} implies
	\begin{equation}\label{eq:PoincareForSets}
		\mu_k(A) = (-1)^{n-k} \mu_k(\overline{A}).
	\end{equation}
\end{remark}



\section{Intrinsic volumes for definable functions}
\label{sec:dist}

The next logical step, lifting from constructible to definable
functions, is the focus of this paper. Let $\Def(\real^n)$ denote the
\style{definable functions} on $\real^n$, that is, the set of
functions $h : \real^n \to \real$ whose graphs
are definable sets in $\real^n\times\real$ which coincide with
$\real^n\times\{0\}$ outside of a ball (thus compactly supported and
bounded). 
In \cite{BG:pnas},
integration with respect to Euler characteristic $\mu_0$ was lifted to
a dual pair of nonlinear ``integrals'' $\int\cdot\dchifloor$ and
$\int\cdot\dchiceil$ via the following limiting process, now extended
to $\mu_k$:

\begin{definition}
\label{def:int}
For $h \in \Def(\real^n)$, the \style{lower} and \style{upper Hadwiger integrals} of $h$ are, respectively,
	\begin{align}
		\int h\ \dmufloor{k} &= \lim_{m \to \infty} \frac{1}{m} \int \lfloor mh \rfloor \ d\mu_k, \text{ and} \\
		\int h\ \dmuceil{k} &= \lim_{m \to \infty} \frac{1}{m} \int \lceil mh \rceil \ d\mu_k . \notag
	\end{align}
\end{definition}

For $k=n$ these two definitions agree with each other and with the
Lebesgue integral; for all $k<n$, they differ. For $k=0$, these become
the definable Euler integrals $\int\cdot\dchifloor$ and
$\int\cdot\dchiceil$. The following result demonstrates several
equivalent formulations, mirroring those of Section \ref{sec:back}. As
a consequence, the limits in Definition \ref{def:int} are
well-defined, following from compact support and the well-definedness
of $\dchifloor$ and $\dchiceil$ from \cite{BG:pnas}.

\begin{theorem}\label{thm:integrals}
	For $h \in \Def(\real^n)$,
	\begin{align}
		\int h\ \dmufloor{k}
		\label{eq:ex}        &= \int_{s=0}^\infty \mu_k\{ h \ge s \} - \mu_k \{ h < -s \} \ ds & &\text{excursion sets} \\
		\label{eq:slice}        &= \int_{\Planes{n}{n-k}} \int_{P} h \ \dchifloor \ d\lambda(P) & &\text{slices} \\
		\label{eq:proj}        &= \int_{G_{n,k}} \int_L \int_{\pi_L^{-1}(\vx)} h \ \dchifloor d\vx \ d\gamma(L) & &\text{projections} \\
		\label{eq:co}        &= \int_{s=0}^\infty \left( \Conormal{\{ h \ge s\}}(\omega_k) - \Conormal{\{ h < -s\}}(\omega_k) \right) \ ds  & &\text{conormal cycle} \\
		\label{eq:dual}        &=-\int -h\dmuceil{k} & &\text{duality}
	\end{align}	
\end{theorem}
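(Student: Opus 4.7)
The strategy is to establish the excursion-set formula \eqref{eq:ex} first, directly from the Riemann-sum limit in Definition~\ref{def:int}, and then extract the other four identities as corollaries of \eqref{eq:ex}. Each corollary will be either a direct substitution or a Fubini-type exchange, rather than an interchange of limit with integration---the latter being more delicate to justify cleanly.

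For \eqref{eq:ex}, fix $m\geq 1$ and set $g_m := \lfloor mh\rfloor \in \CF$. Using the layer-cake identity $g_m = \sum_{s\geq 1}\one_{\{g_m\geq s\}} - \sum_{s\geq 1}\one_{\{g_m\leq -s\}}$ (a finite sum, as $h$ is bounded), additivity of the constructible-function integral $\int\cdot\,d\mu_k$, and the identifications $\{g_m\geq s\}=\{h\geq s/m\}$ and $\{g_m\leq -s\}=\{h<-(s-1)/m\}$, one obtains
\begin{equation*}
\frac{1}{m}\int g_m\,d\mu_k \;=\; \frac{1}{m}\sum_{s=1}^\infty\mu_k\{h\geq s/m\} \;-\; \frac{1}{m}\sum_{s=1}^\infty\mu_k\{h<-(s-1)/m\},
\end{equation*}
which is the difference of two step-$1/m$ Riemann sums for $s\mapsto\mu_k\{h\geq s\}$ and $s\mapsto\mu_k\{h<-s\}$. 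Boundedness and compact support of $h$, combined with Hardt's theorem applied to the definable family $\{h\geq s\}_{s}$, ensure that both integrands are definable, bounded, and supported in a bounded subinterval of $[0,\infty)$; a bounded definable function of one variable is piecewise continuous and hence Riemann-integrable. Letting $m\to\infty$ proves \eqref{eq:ex} and simultaneously verifies that the defining limit in Definition~\ref{def:int} exists.

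The remaining four identities now follow easily. Formula \eqref{eq:co} results from substituting $\mu_k(A)=\Conormal{A}(\omega_k)$ from \eqref{eq:int-conormal} into \eqref{eq:ex}. Formula \eqref{eq:dual} follows from $\lceil -x\rceil = -\lfloor x\rfloor$, which gives $\int(-h)\,\dmuceil{k} = -\lim_m \tfrac{1}{m}\int\lfloor mh\rfloor\,d\mu_k = -\int h\,\dmufloor{k}$. For \eqref{eq:slice}, apply the slice formula \eqref{eq:ivdef} to each of the two excursion sets in \eqref{eq:ex}, exchange the order of integration by Fubini---justified because Hardt furnishes a uniform bound on $\chi(\{h\geq s\}\cap P)$ across the definable family in $(s,P)$---and recognize the resulting inner integral in $s$ as $\int_P h\,\dchifloor$ via the $k=0$ case of \eqref{eq:ex} already established. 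Identity \eqref{eq:proj} is obtained by the identical argument with the projection formula \eqref{eq:ivProjEq} replacing \eqref{eq:ivdef}.

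The principal obstacle is the Riemann-sum step in \eqref{eq:ex}: one must control the behaviour of $s\mapsto\mu_k\{h\geq s\}$ as $s$ crosses critical values of $h$, and it is precisely here that tameness of the o-minimal setting is indispensable. Once this step is in place, the four corollary identities reduce to bookkeeping.
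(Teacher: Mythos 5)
Your proof takes essentially the same route as the paper's: first establish the excursion-set formula \eqref{eq:ex} from the defining Riemann sum, then derive \eqref{eq:slice}, \eqref{eq:proj}, \eqref{eq:co} by substituting the slicing, projection, and conormal representations of $\mu_k$ into \eqref{eq:ex} and applying Fubini, and obtain \eqref{eq:dual} by duality. The one genuine divergence is your derivation of \eqref{eq:dual}: you apply $\lceil -x\rceil = -\lfloor x\rfloor$ directly inside the defining limit, which is shorter and independent of \eqref{eq:ex}, whereas the paper first proves a second excursion-set formula, $\int h\,\dmuceil{k} = \int_0^\infty \mu_k\{h>s\} - \mu_k\{h\le -s\}\,ds$, and subtracts. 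Your bookkeeping on the negative layer-cake terms (writing $\{\lfloor mh\rfloor \le -s\} = \{h < -(s-1)/m\}$) is also a touch more careful than the paper's $\mu_k\{mh<-i\}$, though the off-by-one washes out in the limit. One caveat on your justification of the Riemann-sum step: for $k>0$ the function $s\mapsto\mu_k\{h\ge s\}$ is an integral over $\Planes{n}{n-k}$ and so is not obviously a definable function of $s$ in the ambient o-minimal structure; Riemann integrability is better justified by observing, via Hardt, that $\chi(\{h\ge s\}\cap P)$ is uniformly bounded and piecewise constant in $s$ for each fixed $P$, and that boundedness and bounded variation persist after integrating over $P$.
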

\begin{proof}
Note that for $T>0$ sufficiently large and $N=mT$,
	\begin{align*}
		\int h\ \dmufloor{k} &= \lim_{m \to \infty} \frac{1}{m} \int \lfloor mh \rfloor \ d\mu_k = \lim_{m \to \infty} \frac{1}{m} \sum_{i=1}^\infty \mu_k \{ mh \ge i \} - \mu_k \{ mh < -i \} \\
		&= \lim_{N \to \infty} \frac{T}{N} \sum_{i=1}^N \mu_k \left\{ h \ge \frac{iT}{N} \right\} - \mu_k \left\{ h < -\frac{iT}{N} \right\} \\
		&= \int_0^T \mu_k\{ h \ge s \} - \mu_k\{ h < -s \} \ ds.
	\end{align*}
Thus, (\ref{eq:ex}); the same proof using $\dmuceil{k}$ implies that
	\begin{equation}
		\int h\ \dmuceil{k} = \int_{s=0}^\infty \mu_k\{ h > s \} - \mu_k \{ h \le -s \} \ ds,
	\end{equation}
which, with (\ref{eq:ex}), yields (\ref{eq:dual}). For (\ref{eq:slice}),
	\begin{equation*}
		\int_0^\infty \mu_k \{ h \ge s \} - \mu_k \{ h < -s \} \ ds = \int_0^\infty \int_{\Planes{n}{n-k}} \chi (\{ h \ge s \} \cap P) - \chi (\{ h < -s \} \cap P) \ d\lambda(P) \ ds.
	\end{equation*}
This integral is well-defined, since the excursion sets $\{ h \ge s\}$ and $\{ h < -s \}$ are definable, and $h$ is bounded and of compact support. The Fubini theorem yields (\ref{eq:slice}) via
	\begin{equation*}
		\int_{\Planes{n}{n-k}} \int_0^\infty \chi (\{ h \ge s \} \cap P) - \chi (\{ h < -s \} \cap P) \ ds \ d\lambda(P) = \int_{\Planes{n}{n-k}} \int_{P} h \ \dchifloor \ d\lambda(P).
	\end{equation*}
For (\ref{eq:proj}), fix an $L \in G_{n,k}$ and let $\pi_L$ be the orthogonal projection map on to $L$. Then the affine subspaces perpendicular to $L$ are the fibers of $\pi_L$ and
	\begin{equation*}
		\{ P \in \Planes{n}{n-k} : P \bot L \} = \{ \pi_L^{-1}(\vx) : \vx \in L \}.
	\end{equation*}
Instead of integrating over $\Planes{n}{n-k}$, integrate over the fibers of orthogonal projections onto all linear subspaces of $G_{n,k}$:
	\begin{equation*}
		\int h \ \dmufloor{k} = \int_{\Planes{n}{n-k}} \int_{P} h \ \dchifloor \ d\lambda(P) = \int_{G_{n,k}} \int_L \int_{\pi_L^{-1}(\vx)} h \ \dchifloor \ d\vx \ d\gamma(L).
	\end{equation*}
Finally, for (\ref{eq:co}), rewrite (\ref{eq:ex}) by expressing the intrinsic volumes in terms of the conormal cycles, as in (\ref{eq:int-conormal}).
\end{proof}

\begin{figure}
	\begin{center}
		\includegraphics{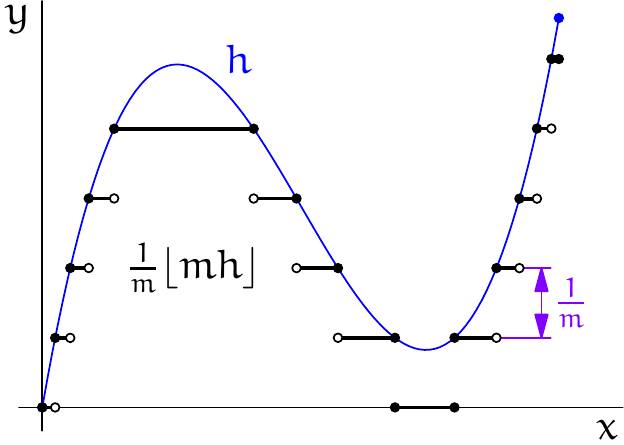} \hspace{.5in} \includegraphics{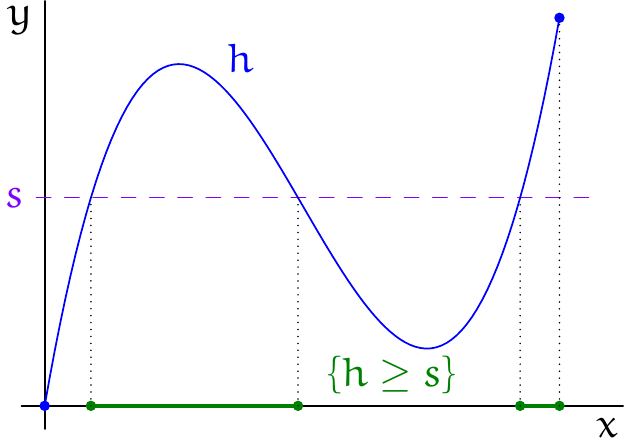}
		\caption[Hadwiger integral definition]{The lower Hadwiger integral is defined as a limit of lower step functions (left), as in Definition \ref{def:int}. It can also be expressed in terms of excursion sets (right), as in Theorem \ref{thm:integrals}, equation \eqref{eq:ex}.}
		\label{fig:HadIntDef}
	\end{center}
\end{figure}

\section{Continuous valuations}
\label{sec:cont}

Valuations on functions are a straightforward generalization of
valuations on sets. A \style{valuation} on $\Def(\real^n)$ is a
functional $v : \Def(\real^n) \to \real$, satisfying $v(0) = 0$ and the
following \style{additivity} condition:
\begin{equation}\label{eq:addfunc}
    v(f) + v(g) = v(f \vee g) + v(f \wedge g) ,
\end{equation}
where $\vee$ and $\wedge$ denote the pointwise max and min,
respectively.

We present two useful topologies on $\Def(\real^n)$ that allow us to
consider \style{continuous} valuations. With these topologies, the
notion of a continuous valuation on $\Def(\real^n)$ properly extends
the notion of a continuous valuation on definable subsets of
$\real^n$.

\begin{definition}\label{def:FlatTopoFunc}
Let $f, g, \in \Def(\real^n)$. The \style{lower} and \style{upper flat
  metrics} on definable functions, denoted $\dflatlow$ and $\dflatup$,
respectively, are defined as follows (see \ref{eq:FlatMetricSets}):
	\begin{align}
		\dflatlow(f,g) &= \int_{-\infty}^\infty
                \dflat(\Conormal{\{f \ge s\}},\Conormal{\{g \ge s\}})
                \ ds \quad \text{and} \label{eq:LowFlatMetricFunct}
                \\ \dflatup(f,g) &= \int_{-\infty}^\infty
                \dflat(\Conormal{\{f > s\}},\Conormal{\{g > s\}})
                \ ds. \label{eq:UprFlatMetricFunct}
	\end{align}
The distinct topologies induced by the lower and upper flat metrics
are the \style{lower} and \style{upper flat topologies} on definable
functions.  A valuation on definable functions is \style{lower-} or
\style{upper-continuous} if it is continuous in the lower or upper
flat topology, respectively.
\end{definition}

Note that the integrals in \eqref{eq:LowFlatMetricFunct} and
\eqref{eq:UprFlatMetricFunct} are well-defined because they may be
written with finite bounds, as it suffices to integrate between the
minimum and maximum values of $f$ and $g$. These metrics extend the
flat metric on definable sets, for they reduce to
\eqref{eq:FlatMetricSets} when $f$ and $g$ are characteristic
functions.

\begin{remark}\label{rem:FlatTopoMetric}
Definition \ref{def:FlatTopoFunc} does result in \style{metrics}. If
$\dflatlow(f,g)=0$, then $\flatnorm{ \Conormal{\{f \ge s\}} -
  \Conormal{\{g \ge s\}} } = 0$ only for $s$ in a set of Lebesgue
measure zero.  However, if the excursion sets of $f$ and $g$ agree
almost everywhere, then \style{all} excursion sets of $f$ and $g$
agree, and thus $f=g$.  For, if $\{s_i\}_i$ is a sequence of negative
real numbers converging $0$, and $\{f \ge s_i\} = \{g \ge s_i\}$ for
all $i$, then:
\begin{equation*}
	\{f \ge 0\} = \bigcap_i \{f \ge s_i\} = \bigcap_i \{g \ge s_i\} = \{g \ge 0\}.
\end{equation*}
The result for $\dflatup(f,g)$ follows similarly from the observation that $\{f > 0\} = \bigcup_{s>0} \{f > s\}$.
\end{remark}

\begin{remark}\label{rem:FlatTopoDistinct}
That the lower and upper flat topologies are distinct can be seen by noting that for the identity function $f$ on the interval $[0,1]$, the sequence of lower step functions $g_m = \frac{1}{m}\left\lfloor mf \right\rfloor$ converges (as $m \to \infty$) to $f$ in the lower flat topology, but not in the upper flat topology.
Dually, upper step functions converge in the upper flat topology, but not in the lower.
\end{remark}

\begin{lemma}\label{lem:HadIntCont}
The lower and upper Hadwiger integrals are lower- and upper-continuous, respectively.
\end{lemma}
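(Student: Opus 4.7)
The plan is to establish Lipschitz continuity of $\int\cdot\,\dmufloor{k}$ with respect to the lower flat metric $\dflatlow$ on $\Def(\real^n)$, which immediately yields lower-continuity; the upper case is symmetric. The key ingredients are already at hand: the conormal-cycle expression \eqref{eq:co} for the Hadwiger integral, the flat-norm bound \eqref{eq:FlatNormIneq}, and additivity of the conormal cycle on complementary definable decompositions.

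Starting from \eqref{eq:co} applied to $f$ and $g$ and subtracting,
\[
\int f\,\dmufloor{k} - \int g\,\dmufloor{k}
= \int_0^\infty (\Conormal{\{f\ge s\}}-\Conormal{\{g\ge s\}})(\omega_k)\,ds
  - \int_0^\infty (\Conormal{\{f<-s\}}-\Conormal{\{g<-s\}})(\omega_k)\,ds.
\]
Because $\omega_k$ is supported in the unit ball bundle $B_1^*\real^n$, replacing $\Conormal{A}-\Conormal{B}$ by its intersection with $B_1^*\real^n$ does not change the pairing with $\omega_k$, and \eqref{eq:FlatNormIneq} together with the definition \eqref{eq:FlatMetricSets} of $\dflat$ gives the uniform pointwise bound
\[
|(\Conormal{A}-\Conormal{B})(\omega_k)|\le M_k\,\dflat(A,B),
\qquad M_k = \max\bigl\{\sup_{B_1^*\real^n}|\omega_k|,\ \sup_{B_1^*\real^n}|d\omega_k|\bigr\},
\]
valid for all definable $A,B$.

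Second, I would convert the $\{h<-s\}$ contribution into a statement about upper excursion sets, which is the form in which $\dflatlow$ sees it. Additivity of the conormal cycle applied to the complementary decomposition $\real^n = \{h\ge t\}\sqcup\{h<t\}$ shows that $\Conormal{\{h\ge t\}}+\Conormal{\{h<t\}}$ is the same current for both $f$ and $g$, so
\[
\Conormal{\{f<t\}}-\Conormal{\{g<t\}} = -\bigl(\Conormal{\{f\ge t\}}-\Conormal{\{g\ge t\}}\bigr),
\]
and by symmetry of the flat norm under negation $\dflat(\{f<t\},\{g<t\}) = \dflat(\{f\ge t\},\{g\ge t\})$. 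Changing variables $s\mapsto -s$ in the second integral and recombining with the first then yields the Lipschitz estimate
\[
\Bigl|\int f\,\dmufloor{k} - \int g\,\dmufloor{k}\Bigr|
\le M_k \int_{-\infty}^\infty \dflat(\{f\ge s\},\{g\ge s\})\,ds
= M_k\,\dflatlow(f,g),
\]
which is lower-continuity. The upper case is dual: use the companion formula $\int h\,\dmuceil{k} = \int_0^\infty\bigl(\Conormal{\{h>s\}}(\omega_k)-\Conormal{\{h\le -s\}}(\omega_k)\bigr)\,ds$ derived in the proof of Theorem \ref{thm:integrals} and repeat the argument with strict/non-strict inequalities exchanged, recovering $\dflatup$.

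The one subtlety I would pause over is that $\{h\ge t\}$ is unbounded for $t$ less than the minimum of $h$, so $\Conormal{\{h\ge t\}}$ has no finite mass as a stand-alone current; the point is that only the differences $\Conormal{\{f\ge t\}}-\Conormal{\{g\ge t\}}$ ever enter the estimate, and these are supported in the bounded region where $f$ and $g$ disagree, so their intersections with $B_1^*\real^n$ have finite flat norm and the additivity identity is legitimate on the level of differences of currents. Beyond this check, the argument is a clean linear estimate that packages the metric and the integral together through \eqref{eq:FlatNormIneq}.
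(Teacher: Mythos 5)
Your proof is correct and follows essentially the same route as the paper's: rewrite the integral difference as a single integral over $s\in\real$ of differences $\mu_k\{f\ge s\}-\mu_k\{g\ge s\}$, bound pointwise via \eqref{eq:FlatNormIneq} using that $\omega_k$ is supported in $B_1^*\real^n$, and recognize the result as $\dflatlow(f,g)$ times a constant. The one place you are more careful than the paper is in justifying the conversion of the $\{h<-s\}$ terms into upper-excursion-set differences via conormal-cycle additivity on $\{h\ge t\}\sqcup\{h<t\}$ — the paper simply writes the $\int_{-\infty}^\infty$ form without comment — and your remark that the additivity identity is legitimate only at the level of compactly supported differences of currents is exactly the right caveat.
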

\begin{proof}
Let $f, g \in \Def(\real^n)$ be supported on $X \subset \real^n$. The following inequality for the lower integrals is via \eqref{eq:FlatNormIneq}:
	\begin{align*}
		\left| \int f \dmufloor{k} - \int g \dmufloor{k} \right| &= \left| \int_{-\infty}^{\infty} (\mu_k\{ f \ge s \} - \mu_k\{ g \ge s \}) \ ds \right| \\
		&\le \int_{-\infty}^\infty \flatnorm{\left(\Conormal{\{ f
                    \ge s \}} - \Conormal{\{ g \ge s \}}\right)\cap B^*_1\real^n} \cdot 
\max \left\{ \sup_{B^*_1\real^n}
        |\omega| , \sup_{B^*_1\real^n} |d\omega| \right\}\\
		&= \dflatlow(f,g) \cdot 
\max \left\{ \sup_{B^*_1\real^n}
        |\omega| , \sup_{B^*_1\real^n} |d\omega| \right\}
	\end{align*}
Since $\omega_k$ and $d\omega_k$ are bounded, 
we have continuity of the lower integrals in the lower flat topology. The proof for the upper integrals is analogous.
\end{proof}

For \style{constructible} functions, the lower and upper flat
topologies of the previous section are equivalent.  Thus, we may refer
to the \style{flat topology} on constructible functions without
specifying \style{upper} or \style{lower}.  A valuation on
constructible functions is \style{conormal continuous} if it is
continuous with respect to the flat topology.  Conormal continuity is
the same as ``smooth'' in the Alesker sense \cite{Alesker,
  Alesker:gafa}, but distinct from continuity in the topology induced
by the Hausdorff metric on definable sets.

\section{Hadwiger's Theorem for functions}
\label{sec:HT4D}

A dual pair of Hadwiger-type classifications for (lower-/upper-)
continuous Euclidean-invariant valuations is the goal of this paper.

\begin{lemma}\label{lem:HadwigerConstrFunc}
If $v : \CF(\real^n) \to \real$ is a (conormal) continuous valuation on
constructible functions, invariant with respect to the right action by
Euclidean motions, 
then $v$ is of the form:
	\begin{equation*}
		v(h) = \sum_{k=0}^n \int_{\real^n} c_k(h) \ d\mu_k.
	\end{equation*}
for some coefficient functions $c_k : \real \to \real$ with $c_k(0)=0$.
\end{lemma}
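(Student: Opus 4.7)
The plan is to reduce the lemma to the Hadwiger classification for flat-continuous Euclidean-invariant valuations on definable (not necessarily convex) sets, by slicing $v$ into a one-parameter family of set-valuations indexed by levels $a \in \real$, and then reassembling. For each fixed $a$, define $v_a(A) := v(a\one_A)$ on definable $A \subset \real^n$. The identities $a\one_A \vee a\one_B = a\one_{A\cup B}$ and $a\one_A \wedge a\one_B = a\one_{A\cap B}$ for $a \geq 0$ (with the roles of $\vee$ and $\wedge$ swapped for $a < 0$) transfer the additivity \eqref{eq:addfunc} of $v$ into additivity of $v_a$ as a set-valuation. Euclidean invariance of $v_a$ is immediate from that of $v$, and conormal continuity of $v_a$ on sets follows from conormal continuity of $v$ on $\CF$ since the excursion sets $\{a\one_A \geq s\}$ equal $A$ for $s$ in a bounded interval depending on $a$ and are independent of $A$ outside that interval, so $\dflatlow(a\one_A, a\one_B)$ is a constant multiple of $\dflat(A, B)$.

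Next, I invoke the Hadwiger-type theorem for flat-continuous Euclidean-invariant valuations on definable sets (essentially Alesker's smooth Hadwiger classification, cited in the paper), to obtain constants $c_k(a)\in\real$, $k=0,\ldots,n$, with
\begin{equation*}
	v_a(A) = \sum_{k=0}^n c_k(a)\,\mu_k(A)
\end{equation*}
for every definable $A$. Linear independence of the $\mu_k$ (testable on balls of varying radii) determines the coefficients $c_k(a)$ uniquely, and the identity $v(0)=0$ forces $c_k(0)=0$ for all $k$.

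To extend from level functions to a general $h \in \CF(\real^n)$, write $h = \sum_i a_i\one_{A_i}$ in canonical form (disjoint definable level sets $A_i$, distinct nonzero values $a_i$). Applying \eqref{eq:addfunc} with $f = h$ and $g = 0$ yields $v(h) = v(h\vee 0) + v(h\wedge 0)$. Within the nonnegative summand $h\vee 0 = \sum_{a_i>0} a_i\one_{A_i}$, I peel off one layer at a time: for $f = a_j\one_{A_j}$ (with $a_j>0$) and $g = (h\vee 0) - f$, the disjointness of their supports together with nonnegativity give $f\vee g = f+g = h\vee 0$ and $f\wedge g = 0$ pointwise, so $v(h\vee 0) = v(a_j\one_{A_j}) + v(g)$. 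Iterating this, and applying the symmetric argument to $h\wedge 0$, yields $v(h) = \sum_i v(a_i\one_{A_i})$. Substituting the formula from the previous paragraph and using $c_k(0)=0$ to absorb the complement of $\bigcup_i A_i$ into the integral, one obtains
\begin{equation*}
	v(h) = \sum_{k=0}^n \sum_i c_k(a_i)\,\mu_k(A_i) = \sum_{k=0}^n \int_{\real^n} c_k(h)\, d\mu_k.
\end{equation*}

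The only substantive ingredient beyond bookkeeping is the definable-set Hadwiger theorem invoked in the second step; the rest is a routine reduction driven by additivity. A minor technical point, worth checking carefully, is that the flat topology on characteristic functions really does reduce to the flat topology on sets (so that continuity of $v$ implies continuity of each $v_a$), but this is built into Definition \ref{def:FlatTopoFunc} together with the elementary structure of excursion sets of $a\one_A$.
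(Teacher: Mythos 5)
Your proposal is correct and organizes the same underlying ideas, but with a somewhat different route at the central classification step. Both proofs reduce $v$ on $\CF$ to a one-parameter family of set-valuations $v_a(A)=v(a\one_A)$, classify each $v_a$ via a Hadwiger-type theorem, and reassemble using the valuation identity \eqref{eq:addfunc}. Where you differ from the paper is in how the set-valuations $v_a$ get classified. The paper is more hands-on: it first restricts to \emph{convex} $A$, where the flat topology coincides with the Hausdorff topology, applies the classical Hadwiger theorem to obtain $v(r\one_A)=\sum_k c_k(r)\mu_k(A)$ for convex sets, and then uses conormal continuity (the valuation being given by integration of the form $\sum_k c_k(r)\alpha_k$ against $\Conormal{A}$) to propagate this identity to all definable $A$. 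You instead invoke an Alesker-style Hadwiger classification of conormally continuous Euclidean-invariant valuations on definable sets as a black box; that is legitimate given the paper's remark that conormal continuity coincides with Alesker smoothness, but it outsources precisely the step the paper derives from the convex Hadwiger theorem. Two smaller points in your favor: you work with $a\in\real$ from the start rather than first restricting to integers as the paper does (and only remarking afterwards that the argument extends), and your reassembly of $v(h)$ from the $v(a_i\one_{A_i})$ — splitting $h$ into $h\vee 0$ and $h\wedge 0$ and peeling one level at a time, with the sign-dependent swap of $\vee$/$\wedge$ for negative levels — makes explicit what the paper summarizes in a single phrase ``by additivity.'' Your observation that $\dflatlow(a\one_A,a\one_B)$ is a constant multiple of $\dflat(A,B)$ is the right justification that conormal continuity of $v$ transfers to each $v_a$. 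Net: the paper's route is more self-contained, re-deriving the definable-set classification from the convex one; yours is shorter but leans on the cited Alesker machinery for that step.
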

\begin{proof}
For the class of indicator functions for {\em convex sets}
$\{h = r \cdot \one_A : r \in
\zed \text{ and } A \subset \real^n \text{ definable} \}$, continuity
of $v$ in the flat topology implies that $v$ is
continuous in the Hausdorff topology.  Since 
convex tame sets are dense (in Hausdorff metric) among convex sets in $\real^n$,
Hadwiger's Theorem for sets implies that
	\begin{equation}\label{eq:HadwigerConstrFunc1}
		v(r \cdot \one_A) = \sum_{k=0}^n c_k(r)\mu_k(A),
	\end{equation}
where $c_k(r)$ are constants that depend only on $v$, not on
$A$. Conormal continuity implies that the valuation $v(A)$ is the
integral of the linear combination of the forms $\omega_k$ (defined in \eqref{eq:int-conormal}),
		\begin{equation}\label{eq:HadwigerConstrFuncForms}
		\sum_{k=0}^n c_k(r)\alpha_k
	\end{equation}	
over $\Conormal{A}$.

Now suppose $h = \sum_{i=1}^m r_i \one_{A_i}$ is a finite sum of
indicator functions of 
disjoint definable subsets $A_1, \ldots, A_m$ of $\real^n$ for some integer constants $r_1 < r_2 < \cdots < r_m$. By equation \eqref{eq:HadwigerConstrFunc1} and additivity,
	\begin{equation}\label{eq:HadwigerConstrFunc2}
		v(h) = \sum_{k=0}^n \sum_{i=1}^m c_k(r_i) \mu_k(A_i).
	\end{equation}
	
We can rewrite equation \eqref{eq:HadwigerConstrFunc2} in terms of excursion sets of $h$. Let $B_i = \bigcup_{j \ge i} A_j$. That is, $B_i = \{ h \ge r_i \}$ and $B_i = \{ h > r_{i-1} \}$. Then the valuation $v(h)$ can be expressed as:
	\begin{equation}\label{eq:HadwigerConstrFunc3}
		v(h) = \sum_{k=0}^n \sum_{i=1}^m \left( c_k(r_i) - c_k(r_{i-1}) \right) \mu_k(B_i),
	\end{equation}
where $c_k(r_0)=0$. Thus, a valuation of a constructible function can be expressed as a sum of finite differences of valuations of its excursion sets. Equivalently, equation \eqref{eq:HadwigerConstrFunc3} can be written in terms of constructible Hadwiger integrals:
	\begin{equation}\label{eq:ValuationFunctional3}
		v(h) = \sum_{k=0}^n \int_{\real^n} c_k(h) \ d\mu_k.
	\end{equation}
Since we require that a valuation of the zero function is zero, it must be that $c_k(0)=0$ for all $k$.
\end{proof}

Note that Lemma \ref{lem:HadwigerConstrFunc} holds for functions of the form $h = \sum_{i=1}^m r_i \one_{A_i}$ where the $A_i$ are definable and the $r_i \in \real$ are not necessarily integers.

In writing an arbitrary valuation on definable functions as a sum of Hadwiger integrals, the situation becomes complicated if the coefficient functions $c_k$ are decreasing on any interval. The following proposition illustrates the difficulty:

\begin{proposition}\label{prop:DecreasingComposition}
Let $c : \real \to \real$ be a continuous, strictly decreasing function. Then,
	\begin{equation}\label{eq:DecreasingComposition}
		\lim_{m \to \infty} \int_{\real^n} c\left( \frac{1}{m} \lceil mh \rceil \right) \ d\mu_k = \lim_{m \to \infty} \int_{\real^n} \frac{1}{m} \lfloor m c(h) \rfloor \ d\mu_k.
	\end{equation}
\end{proposition}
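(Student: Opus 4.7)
The plan is to show that both limits in \eqref{eq:DecreasingComposition} equal the same Riemann--Stieltjes integral, namely $\int \phi(s)\,dc(s)$ with $\phi(s) := \mu_k\{h > s\}$. I would assume throughout that $c(0)=0$ (which is implicit for $c(h)$ to have compact support) and first treat the case $h \ge 0$; the general case follows by splitting $h$ into its positive and negative parts and invoking additivity of $\mu_k$ on the resulting level sets, which reduces all relevant excursion sets to bounded definable sets.

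For the left-hand side, I would let $f_m = \tfrac1m \lceil m h \rceil$ and note that this step function takes the value $j/m$ on $A_j^m := \{(j-1)/m < h \le j/m\}$, so $c(f_m) \equiv c(j/m)$ on $A_j^m$ and $\int c(f_m)\,d\mu_k = \sum_{j \ge 1} c(j/m)\,\mu_k(A_j^m)$ (the $j=0$ term vanishes). Using $\mu_k(A_j^m) = \phi((j-1)/m) - \phi(j/m)$ by additivity of $\mu_k$ and performing Abel summation yields
\begin{equation*}
\int c(f_m)\,d\mu_k \;=\; c(1/m)\,\phi(0) \;+\; \sum_{j \ge 1}\bigl[c((j+1)/m) - c(j/m)\bigr]\,\phi(j/m).
\end{equation*}
The boundary term tends to $0$ since $c$ is continuous with $c(0)=0$; the remaining sum is a Riemann--Stieltjes sum converging to $\int_0^\infty \phi(s)\,dc(s)$, using that $\phi$ is definable (hence of bounded variation on compact intervals) by Hardt's theorem, and that $c$ is continuous and monotone.

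For the right-hand side, I would invoke Definition~\ref{def:int} to identify the limit with $\int c(h)\,\dmufloor{k}$. Because $c$ is decreasing with $c(0)=0$ and $h \ge 0$, we have $c(h) \le 0$, so \eqref{eq:ex} collapses to $\int c(h)\,\dmufloor{k} = -\int_0^\infty \mu_k\{h > c^{-1}(-s)\}\,ds$. The change of variables $t = c^{-1}(-s)$, equivalently $ds = -dc(t)$, converts this into $\int_0^\infty \phi(t)\,dc(t)$, matching the LHS. The main obstacle I anticipate lies in the general case where $h$ takes both signs: there $\{h > s\}$ is unbounded for $s < 0$, so one must pass to bounded substitutes (for instance $\{h > s\} \cap \mathrm{supp}(h)$ and its $\{h \le s\}$-analogue), exploit o-minimality to see that $\phi$ has only finitely many jumps, and verify that swapping $\{h > s\}$ for $\{h \ge s\}$ does not alter the Stieltjes integral because they differ at only finitely many levels.
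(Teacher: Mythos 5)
Your proof is correct, and while it relies on the same underlying change-of-variables idea as the paper, you package it more cleanly. The paper's proof directly manipulates the two step-function sums, introduces a discrete sampling set $\mathcal{S} = \{c^{-1}(t/m)\}$ to reparametrize one sum in terms of the other, and asserts equality in the limit by continuity of $c$; it is rather compressed and does not justify convergence beyond a single appeal to continuity. Your reframing via a common Riemann--Stieltjes target $\int_0^\infty \phi(s)\,dc(s)$, with $\phi(s)=\mu_k\{h>s\}$, is tidier: the Abel summation on the left and the Stieltjes change of variables $t=c^{-1}(-s)$ on the right each land at the same integral, and the appeal to definability of $\phi$ (hence bounded variation) to justify Riemann--Stieltjes convergence is exactly the justification the paper omits. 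You are also right to flag $c(0)=0$ as a necessary (implicit) hypothesis for $c(h)$ to remain compactly supported, and to flag the subtleties when $h$ changes sign --- the paper's proof glosses over both. One minor gap in your write-up is that the sign-indefinite case is only sketched: splitting $h=h^+-h^-$ and arguing that the positive and negative tails decouple is plausible because the excursion-set formula \eqref{eq:ex} is linear in the two tails, but this deserves an explicit line noting that for $s>0$ one has $\{h\geq s\}=\{h^+\geq s\}$ and $\{h<-s\}=\{-h\geq s\}\cap\mathrm{supp}(h)$, so each half reduces to the nonnegative case you treated. As it stands, that reduction is asserted rather than shown; it is, however, the same omission the paper makes, so it should not be counted against you.
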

\begin{proof}
On the left side of equation \eqref{eq:DecreasingComposition}, we integrate $c$ composed with upper step functions of $h$:
	\begin{equation*}
		\int_{\real^n} c\left( \frac{1}{m} \lceil mh \rceil \right) \ d\mu_k = \sum_{i \in \zed} c\left(\tfrac{i}{m}\right) \cdot \mu_k\left\{ \tfrac{i-1}{m} < h \le \tfrac{i}{m} \right\}.
	\end{equation*}
On the right side of equation \eqref{eq:DecreasingComposition}, we integrate lower step functions of the composition $c(h)$:
	\begin{equation*}
		\int_{\real^n} \frac{1}{m} \lfloor m c(h) \rfloor \ d\mu_k = \sum_{t \in \zed} \tfrac{t}{m} \cdot \mu_k\left\{ \tfrac{t}{m} \le c(h) < \tfrac{t+1}{m} \right\}.
	\end{equation*}
Since $c$ is strictly decreasing, $c^{-1}$ exists. There exists a discrete set
	\[ \mathcal{S} = \left\{ c^{-1}\left( \tfrac{t}{m} \right) \ \big| \ t \in \zed \right\} \cap \{ \text{neighborhood around range of $h$} \}.\]
We may then rewrite the above sum as:
	\begin{equation*}
		\int_{\real^n} \frac{1}{m} \lfloor m c(h) \rfloor \ d\mu_k = \sum_{s \in \mathcal{S}} c(s) \cdot \mu_k \{ c(s) \le c(h) < c(s - \epsilon) \}
		= \sum_{s \in \mathcal{S}} c(s) \cdot \mu_k \{ s - \epsilon < h \le s \},
	\end{equation*}
where $\epsilon \to 0$ as $m \to \infty$ by continuity of $c$. In the limit, both sides are equal:
	\begin{equation*}
		\lim_{\epsilon \to 0} \sum_{s \in \mathcal{S}} c(s) \cdot \mu_k \{ s - \epsilon < h \le s \} \} = \lim_{m \to \infty} \sum_{i \in \zed} c\left(\tfrac{i}{m}\right) \cdot \mu_k\left\{ \tfrac{i-1}{m} < h \le \tfrac{i}{m} \right\}. \qedhere
	\end{equation*}
\end{proof}
\begin{figure}
	\begin{center}
		\includegraphics{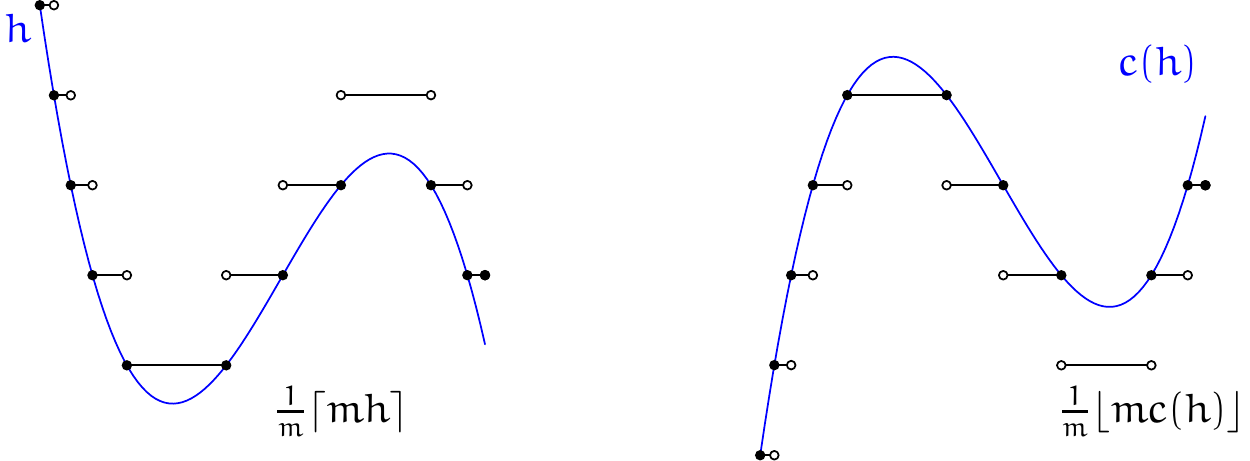}
		\caption[Decreasing composition]{An upper step function of $h$, depicted at left, composed with a decreasing function $c$, becomes a lower step function of $c(h)$, depicted at right. As the step size approaches zero, we obtain Proposition \ref{prop:DecreasingComposition}.}
		\label{fig:StepFunctions}
	\end{center}
\end{figure}

Proposition \ref{prop:DecreasingComposition} implies that if $c : \real \to \real$ is increasing on some interval and decreasing on another, then the maps $v, u : \Def(\real^n) \to \real$ defined
\begin{equation*}
	v(h) = \int_{\real^n} c(h) \dmufloor{k} \quad \text{and} \quad u(h) = \int_{\real^n} c(h) \dmuceil{k}
\end{equation*}
are neither lower- nor upper-continuous.

Lemma \ref{lem:HadwigerConstrFunc} and Proposition \ref{prop:DecreasingComposition} provide a generalization of Hadwiger's Theorem:

\begin{theorem}\label{thm:main}
Any $\euc_n$-invariant definably lower-continuous valuation $v:\Def(\real^n)\to\real$ is of the form:
	\begin{equation}
		v(h) = \sum_{k=0}^n \left( \int_{\real^n} c_k\circ h \dmufloor{k} \right) \, ,
	\end{equation}
for some $c_k \in C(\real)$ continuous and monotone, satisfying $c_k(0)=0$. Likewise, an upper-continuous valuation can be similarly written in terms of upper Hadwiger integrals.
\end{theorem}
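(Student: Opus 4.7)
The strategy is a reduction-plus-extension: restrict $v$ to constructible functions where Lemma~\ref{lem:HadwigerConstrFunc} applies, extract continuous coefficients $c_k$, then extend by density along lower step functions using lower-continuity, and finally use Proposition~\ref{prop:DecreasingComposition} to pin down monotonicity.

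I would first restrict $v$ to $\CF(\real^n)$, on which the lower and upper flat topologies coincide, so $v|_{\CF}$ is a conormal-continuous $\euc_n$-invariant valuation. Lemma~\ref{lem:HadwigerConstrFunc} then produces coefficients $c_k:\real\to\real$ with $c_k(0)=0$ satisfying
\begin{equation*}
v(h)=\sum_{k=0}^{n}\int_{\real^n}c_k(h)\,d\mu_k, \qquad h\in\CF(\real^n).
\end{equation*}
To upgrade each $c_k$ to a continuous function of its argument, I would probe $v$ on the two-parameter family $r\mapsto r\one_{\lambda B}$ where $B$ is a fixed unit ball and $\lambda>0$. Homogeneity of intrinsic volumes yields the polynomial identity $v(r\one_{\lambda B})=\sum_k c_k(r)\mu_k(B)\lambda^k$; evaluating at $n+1$ distinct values of $\lambda$ and inverting the resulting Vandermonde system expresses each $c_k(r)$ as a fixed linear combination of values of $v$. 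Since $r\mapsto r\one_A$ is continuous into the lower flat topology (excursion sets jump only at $s=r$, which moves continuously with $r$), lower-continuity of $v$ descends to continuity of each $c_k$.

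Next I would extend from $\CF$ to $\Def$ by density. For $h\in\Def(\real^n)$, set $h_m:=\tfrac{1}{m}\lfloor mh\rfloor\in\CF$; by the level-set argument of Remark~\ref{rem:FlatTopoDistinct}, $h_m\to h$ in the lower flat topology, so lower-continuity of $v$ gives
\begin{equation*}
v(h)=\lim_{m\to\infty}\sum_{k=0}^{n}\int_{\real^n}c_k(h_m)\,d\mu_k.
\end{equation*}
For a continuous monotone increasing $c_k$, composition preserves lower flat convergence (since $\{c_k\circ h_m\ge s\}=\{h_m\ge c_k^{-1}(s)\}$ and superlevel sets converge flatly for a.e.~$s$), so Lemma~\ref{lem:HadIntCont}, together with the coincidence of $d\mu_k$ and $\dmufloor{k}$ on $\CF$, identifies the $k$-th summand limit as $\int c_k\circ h\,\dmufloor{k}$. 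A monotone decreasing $c_k$ is handled symmetrically using the duality \eqref{eq:dual}, which permits reabsorbing an upper integral back into a lower one on the sign-flipped argument.

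The main obstacle is forcing each $c_k$ to be monotone, rather than merely continuous. Here Proposition~\ref{prop:DecreasingComposition} is decisive: if some $c_k$ had both a strictly increasing and a strictly decreasing arc, then composing with the lower step approximations $h_m$ would produce a lower Hadwiger integral on one portion of the range and an upper Hadwiger integral on the other. I would test this by choosing $h\in\Def$ whose range straddles a putative turning point of $c_k$ and exploiting the genuine distinctness of the lower and upper flat topologies (Remark~\ref{rem:FlatTopoDistinct}) to construct $h_m\to h$ in lower flat along which the ``upper piece'' fails to be continuous. The resulting discrepancy would contradict lower-continuity of $v$, forcing each $c_k$ to be monotone and completing the classification.
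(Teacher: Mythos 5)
Your proposal is, at its core, the same argument as the paper's: reduce to constructible functions via Lemma~\ref{lem:HadwigerConstrFunc}, approximate $h$ by lower step functions $h_m=\tfrac{1}{m}\lfloor mh\rfloor$, invoke Proposition~\ref{prop:DecreasingComposition} to force monotonicity of the $c_k$, and pass to the limit using lower-continuity of $v$ together with Lemma~\ref{lem:HadIntCont}. Two of your interpolations do go beyond what the paper actually spells out and are worth keeping in mind. First, the paper simply invokes ``continuity of the $c_k$'' when passing to the limit; your Vandermonde argument on the family $r\mapsto r\one_{\lambda B}$, exploiting $\mu_k(\lambda B)=\lambda^k\mu_k(B)$ and the resulting invertible linear system, supplies a genuine justification that the paper elides. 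Second, you make explicit that composition with an increasing continuous $c_k$ preserves lower flat convergence, which is the unstated ingredient behind the paper's step from $\lim_m\int c_k(h_m)\,\dmufloor{k}$ to $\int c_k(h)\,\dmufloor{k}$.

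One point where you drift from what the paper is actually doing: you try to salvage a \emph{decreasing} $c_k$ by duality \eqref{eq:dual}, ``reabsorbing an upper integral into a lower one on the sign-flipped argument.'' That manipulation does not go through, because the Hadwiger integrals are not linear in the integrand; $\int c_k(h)\,\dmuceil{k}=-\int(-c_k)(h)\,\dmufloor{k}$ does not put the term into the required shape $\int\tilde c_k(h)\,\dmufloor{k}$. In fact, the paper's own proof concludes (from Proposition~\ref{prop:DecreasingComposition} applied to lower-step approximations in the lower flat topology) that the $c_k$ are \emph{increasing}, not merely monotone: a decreasing $c_k$ composed with lower steps behaves like an upper approximation, whose limit is governed by $\dmuceil{k}$ rather than $\dmufloor{k}$, contradicting lower-continuity. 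So rather than reaching for duality, the right move is simply to observe that a decreasing arc is excluded outright in the lower-continuous case, which is what the paper does.
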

\begin{proof}
Let $v : \Def(\real^n) \to \real$ be a lower valuation, and $h \in \Def(\real^n)$. First approximate $h$ by lower step functions. That is, for $m > 0$, let $h_m = \frac{1}{m} \lfloor mh \rfloor$. In the lower flat topology, $\lim_{m \to \infty} h_m = h$. On each of these step functions, Lemma \ref{lem:HadwigerConstrFunc} implies that $v$ is a linear combination of Hadwiger integrals:
	\begin{equation}\label{eq:HDF1}
		v(h_m) = \sum_{k=0}^n \int_{\real^n} c_k(h_m) \ d\mu_k.
	\end{equation}
for some $c_k : \real \to \real$ with $c_k(0)=0$, depending only on $v$ and not on $m$. By Proposition \ref{prop:DecreasingComposition}, the $c_k$ must be increasing functions since we are approximating $h$ with lower step functions in the lower flat topology.
	
We can alternately express equation \eqref{eq:HDF1} as
	\begin{equation}\label{eq:HDF2}
		v(h_m) = \sum_{k=0}^n \int_{\real^n} c_k(h_m) \ \dmufloor{k},
	\end{equation}
where we choose lower rather than upper integrals since $v$ is continuous in the lower flat topology. Continuity of $v$, and convergence of $h_m$ to $h$, in the lower flat topology imply that $v(h_m)$ converges to $v(h)$ as $h \to \infty$. More specifically,
	\begin{equation}\label{eq:HDF3}
		v(h) = \lim_{m \to \infty} v\left(  h_m \right) = \sum_{k=0}^n \lim_{m \to \infty} \int_{\real^n} c_k\left( h_m \right) \ \dmufloor{k}.
	\end{equation}
By continuity of the lower Hadwiger integrals (Lemma \ref{lem:HadIntCont}) and the $c_k$, Equation \eqref{eq:HDF3} becomes
	\begin{equation}\label{eq:HDF4}
		v(h) = \sum_{k=0}^n \int_{\real^n} c_k\left(\lim_{m \to \infty} h_m \right) \ \dmufloor{k} = \sum_{k=0}^n \int_{\real^n} c_k(h) \ \dmufloor{k}.
	\end{equation}
The proof for the upper valuation is analogous.
\end{proof}

\begin{corollary}
Any $\euc_n$-invariant valuation both upper- and lower-continuous is a weighted Lebesgue integral.
\end{corollary}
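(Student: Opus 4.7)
The plan is to invoke Theorem~\ref{thm:main} twice: since $v$ is continuous in both flat topologies, we get representations
\[
	v(h) = \sum_{k=0}^n \int c_k\circ h\,\dmufloor{k} = \sum_{k=0}^n \int c'_k\circ h\,\dmuceil{k}
\]
for continuous monotone $c_k, c'_k$ vanishing at $0$. Testing on constructible functions $h = r\one_A$ with $A$ compact convex---where the upper and lower Hadwiger integrals coincide---both sides collapse to $\sum_k c_k(r)\mu_k(A) = \sum_k c'_k(r)\mu_k(A)$, and linear independence of the intrinsic volumes on convex bodies (classical Hadwiger) forces $c_k\equiv c'_k$.

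I then decouple by degree via scaling. For $\lambda > 0$ the dilation $h_\lambda(x) = h(x/\lambda)$ satisfies $\{h_\lambda\ge s\} = \lambda\cdot\{h\ge s\}$; combined with the $k$-homogeneity of $\mu_k$ and the excursion-set formula~(\ref{eq:ex}) this gives $\int c_k(h_\lambda)\,\dmufloor{k} = \lambda^k\int c_k(h)\,\dmufloor{k}$, and analogously for $\dmuceil{k}$. Equating coefficients of the two polynomial expressions for $v(h_\lambda)$ yields
\[
	\int c_k(h)\,\dmufloor{k} = \int c_k(h)\,\dmuceil{k}
\]
for every $h\in\Def(\real^n)$ and each $0 \le k \le n$. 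For $k = n$ this is automatic, since $\dmufloor{n} = \dmuceil{n} = dx$.

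It remains to show $c_k \equiv 0$ for $k < n$. Using $\mu_k\{g\ge s\} - \mu_k\{g>s\} = \mu_k\{g=s\}$ (from additivity of $\mu_k$), combined with~(\ref{eq:ex}), the difference of the two Hadwiger integrals reduces to the layer-cake identity
\[
	\int_{\real}\mu_k(\{h=t\})\,dc_k(t) = 0,
\]
where $dc_k$ is the Lebesgue--Stieltjes measure of the (atom-free, continuous) monotone $c_k$. Testing with the translated slicing family $h_\tau(x) = (x_1 - \tau)\one_{\bar B_1}(x)$ for $\tau\in\real$, the generic level set $\{h_\tau = t\}$ is a closed $(n-1)$-ball of radius $\sqrt{1-(t+\tau)^2}$ whose intrinsic volume is $\mu_k = \alpha_k(1-(t+\tau)^2)^{k/2}$ for a strictly positive constant $\alpha_k$ (using $k \le n - 1$). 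After the substitution $u = t + \tau$ the identity becomes
\[
	\alpha_k\int_{\real}\phi(t+\tau)\,dc_k(t) = 0 \qquad \text{for every }\tau\in\real,
\]
with kernel $\phi(u) = (1-u^2)^{k/2}\one_{[-1,1]}(u)$, nonnegative and strictly positive on $(-1,1)$. Because $c_k$ is monotone, $dc_k$ has constant sign, so this condition forces $dc_k$ to assign zero mass to every open interval of length $2$; hence $dc_k \equiv 0$, and $c_k(0) = 0$ gives $c_k \equiv 0$. Only the $k = n$ term survives, and $v(h) = \int_{\real^n} c_n\circ h\,dx$, a weighted Lebesgue integral.

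The main obstacle is the layer-cake reduction in the last paragraph: the identity $\int c_k(h)\,\dmufloor{k} - \int c_k(h)\,\dmuceil{k} = \int \mu_k\{h=t\}\,dc_k(t)$ requires a Lebesgue--Stieltjes change of variables valid for a merely continuous monotone $c_k$, along with the verification that non-generic levels of the slicing family $h_\tau$ (in particular $t=0$, where $\{h_\tau = t\}$ is not an $(n-1)$-ball) contribute nothing because continuous $c_k$ has no atoms.
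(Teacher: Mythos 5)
Your proposal is correct and follows the same overall strategy as the paper's proof --- represent $v$ in both forms via Theorem~\ref{thm:main} and argue that the coefficients for $k<n$ must vanish --- but where the paper merely \emph{asserts} that ``integration with respect to $\dmufloor{k}$ and $\dmuceil{j}$ are independent unless $k=j=n$'' and that ``lower and upper Hadwiger integrals with respect to $\mu_k$ are unequal, except when $k=n$, implying $\underline{c}_k = \overline{c}_k = 0$,'' you actually prove this. The three steps you supply --- (i) forcing $c_k = c'_k$ by restricting to convex indicator functions, where the lower and upper integrals agree and Hadwiger's linear independence of the $\mu_k$ applies; (ii) decoupling degrees by dilating $h\mapsto h(\cdot/\lambda)$ and matching powers of $\lambda$; (iii) converting the identity $\int c_k(h)\,\dmufloor{k}=\int c_k(h)\,\dmuceil{k}$ into the Lebesgue--Stieltjes constraint $\int_{\real}\mu_k(\{h=t\})\,dc_k(t)=0$ and killing it by sliding the slice family $h_\tau$ --- constitute precisely the missing content. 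Your observation that the monotonicity of $c_k$ makes $dc_k$ a signed-definite measure, so that a nonnegative kernel with full support on a translated window annihilates it, is a clean way to finish. The technical caveats you flag (non-strict monotonicity of $c_k$, the non-compact $t=0$ level set, atomlessness of $dc_k$) are genuine and your handling of them is adequate: flat parts of $c_k$ carry zero $dc_k$-mass, and the layer-cake difference is derived only over $s\ne 0$ so the anomalous level never enters the Lebesgue--Stieltjes integral. In short, your argument is a rigorous version of what the paper leaves implicit, and is more convincing than the paper's own two-sentence proof.
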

\begin{proof}
Integration with respect to $\dmufloor{k}$ and $\dmuceil{j}$ are independent unless $k=j=n$. 
For any $v$ both upper- and lower-continuous, we have
	\begin{equation*}
		v(h) = \sum_{k=0}^n \int_{\real^n} \underline{c}_k(h) \ \dmufloor{k} = \sum_{k=0}^n \int_{\real^n} \overline{c}_k(h) \ \dmuceil{k}
	\end{equation*}
for some functions $\underline{c}_k$ and $\overline{c}_k$.
	
Lower and upper Hadwiger integrals with respect to $\mu_k$ are unequal, except when $k=n$, implying that $\underline{c}_k = \overline{c}_k = 0$ for $k = 0, 1, \ldots, n-1$, and $\underline{c}_n = \overline{c}_n$. Therefore,
	\begin{equation*}
		v(h) = \int_{\real^n} c(h) \ d\mu_n
	\end{equation*}
for some continuous function $c : \real \to \real$, and with $d\mu_n = \dmufloor{n} = \dmuceil{n}$ denoting Lebesgue measure.
\end{proof}

\section{Speculation}
\label{sec:rem}

The present constructions are potentially applicable to generalizations of current applications of intrinsic volumes. One such recent application is to the dynamics of cellular structures, such as crystals and foams in microstructure of materials. The cells in such structures often change shape and size over time in order to minimize the total energy level in the system. Let $C = \bigcup_{i=0}^n C_i$ be a closed $n$-dimensional cell, with $C_i$ denoting the union of all $i$-dimensional features of the cell: {\em i.e.}, $C_0$ is the set of vertices, $C_1$ the set of edges, etc. MacPherson and Srolovitz found that when the cell structure changes by a process of \style{mean curvature flow}, the volume of the cell changes according to
\begin{equation}\label{eq:MPS-cell}
	\frac{d\mu_n}{dt}(C) = -2\pi M\gamma \left( \mu_{n-2}(C_{n}) - \frac{1}{6}\mu_{n-2}(C_{n-2}) \right)
\end{equation}
where $M$ and $\gamma$ are constants determined by the material properties of the cell structure \cite{MS}. Replacing the intrinsic volumes of cells with Hadwiger integrals may (1) lead to interesting dynamical systems on the (singular) foliations (by the level sets of a piece-wise smooth function, and (2) allow for description of evolution of real-valued physical fields (temperature, density, etc.) of cells.

A more widely-known application of the intrinsic volumes is in the formulas for expected Euler characteristic of excursion sets in Gaussian random fields \cite{Adler,AdlerTaylor}. These formulae and the associated Gaussian kinematic formula \cite{AdlerTaylor} rely crucially on the intrinsic volumes of excursion sets. It is already recognized in recent work \cite{BoBo} that the definable Euler measure $\dchifloor=\dmufloor{0}$ is relevant to Gaussian random fields: we strongly suspect that the other definable Hadwiger measures $\dmufloor{k}$ and $\dmuceil{k}$ of this paper are immediately applicable to Gaussian random fields.


\bibliographystyle{amsalpha}

\end{document}